\newtheorem{theorem}{Theorem}[section]
\newtheorem{proposition}[theorem]{Proposition}
\newtheorem{lemma}[theorem]{Lemma}
\newtheorem{corollary}[theorem]{Corollary}
\numberwithin{equation}{section}
\numberwithin{figure}{section}
\newcommand{\E}{\mathds{E}}
\renewcommand{\P}{\mathds{P}}
\renewcommand{\Pr}{\mathbf{P}}
\newcommand{\R}{\mathbb{R}}
\newcommand{\T}{\mathbb{T}}
\newcommand{\Z}{\mathbb{Z}}
\newcommand{\N}{\mathbb{N}}
\newcommand{\dd}{{\rm d}}
\newcommand{\fstop}{\; \text{.}}
\newcommand{\comma}{\; \text{,}\;\;}
\newcommand{\seq}[1]{\left(#1\right)}
\newcommand{\tonde}[1]{\left(#1\right)}
\newcommand{\ttonde}[1]{\big(#1\big)}
\newcommand{\tttonde}[1]{(#1)}
\newcommand{\quadre}[1]{\left[#1\right]}
\newcommand{\abs}[1]{\left\lvert#1\right\rvert}
\newcommand{\emparg}{\,\cdot\,}
\newcommand{\eqdef}{\coloneqq}
\newcommand{\defeq}{\eqqcolon}
\newcommand{\car}{\mathds{1}}
\newcommand{\norm}[1]{\left\lVert#1\right\rVert}
\newcommand{\eps}{\varepsilon}
\newcommand{\set}[1]{\left\{#1\right\}}							
\newcommand{\cC}{\ensuremath{\mathcal C}} 
\newcommand{\cE}{\ensuremath{\mathcal E}}
\newcommand{\cL}{\ensuremath{\mathcal L}} 
\newcommand{\cN}{\ensuremath{\mathcal N}} 
\newcommand{\cP}{\ensuremath{\mathcal P}}
\title[Concentration and local smoothness]{Concentration and local smoothness\\ of the averaging process}
\subjclass[2020]{Primary 60K35; secondary 60J27, 82B20, 82C26, 91D30.}
\author{Federico Sau}
\address{Dipartimento di Matematica, Informatica e Geoscienze,
	Università degli Studi di Trieste, Trieste, Italy}
\email{federico.sau@units.it}
\keywords{Interacting particle systems; mixing of Markov chains; averaging process; hydrodynamic limits}
\begin{document} 
	\maketitle	
\begin{abstract}
We consider the averaging process
on the discrete $d$-dimensional torus. On this graph, the process is known to converge to equilibrium on diffusive timescales, not exhibiting cutoff. In this work, we refine this picture in two ways. Firstly, we prove a concentration phenomenon of the averaging process around its mean, occurring on a shorter timescale than the one of its relaxation to equilibrium. Secondly, we establish sharp gradient estimates, which capture its fast local smoothness property. This is the first setting in which these two features of the averaging process --- concentration and local smoothness --- can be quantified. These features carry useful information on a number of large scale properties of the averaging process. As an illustration of this fact, we determine the limit profile of its distance to equilibrium and derive a quantitative hydrodynamic limit for it. Finally, we discuss their implications on cutoff for the binomial splitting process, the particle analogue of the averaging process.
\end{abstract}

\section{Introduction}
The \emph{averaging process} on a graph is a random evolution of  a probability mass function over its vertices. Its dynamics consists in repeatedly iterating the following two operations:
\begin{enumerate}[(i)]
	\item \label{it:intro-1} first, select an unordered pair of nearest neighbor vertices at the arrival times of i.i.d.\ Poisson clocks;
	\item \label{it:intro-2} then, level out the masses associated to them.
\end{enumerate}
Hence, step \eqref{it:intro-1} is random, while step \eqref{it:intro-2} is deterministic. In particular, each update is performed at some i.i.d.\  space-time locations, and conserves the total mass. Therefore,  as time runs, for any mass initialization and provided the underlying graph is connected, the averaging dynamics will converge to a deterministic flat distribution.   
In view of the degeneracy of the equilibrium and of the intrinsic irreversibility of the averaging dynamics, a comprehensive quantitative picture  of such a convergence in relation to the underlying graph structure is, despite the recent advances, still a largely open problem of both applied and theoretical interest.	

The averaging process  appears in the computer science literature as a basic model of opinion dynamics (or randomized distributed/gossip algorithms, see, e.g., \cite{boyd_et_al_randomized_2006,shah2009gossip,movassagh_repeated2022} and references therein), together with a long list of variants, e.g.,  with updates involving only one vertex at a time (see, e.g., \cite{becchetti_consensus_2020} and references therein). In this realm, vertices are typically interpreted as agents, their masses as corresponding to their opinions, and the long-run limiting distribution arising from the local random interactions is then referred to as the consensus. 

The  mathematical interest on the averaging process as an interacting particle system  was revived about a decade ago by a series of lectures and expository articles by Aldous \cite{aldous2011finite} and Aldous and Lanoue \cite{aldous_lecture_2012}. Since then, the list of works on the subject rapidly grew (see, e.g., \cite{chatterjee2020phase,spiro_averaging_2022,quattropani2021mixing,movassagh_repeated2022,cao_explicit_2023,caputo_quattropani_sau_cutoff_2023}). In particular,  the seminal paper \cite{aldous_lecture_2012} was the first one to link this model to the theory of Markov chains mixing times \cite{aldous-fill-2014,levin2017markov,montenegro_mathematical_2005}, and  established the first general bounds on the mixing of the averaging process. Only recently,   \cite{chatterjee2020phase,quattropani2021mixing,caputo_quattropani_sau_cutoff_2023} provided the first  sharp asymptotic mixing results  on specific geometries, either proving or excluding the occurrence of the so-called cutoff phenomenon \cite[Chapter 18]{levin2017markov}. 

In this article, we consider the averaging process on large discrete $d$-dimensional tori. Mixing in this setting is covered by the analysis carried out in \cite{quattropani2021mixing}, which shows that the averaging process mixes gradually (i.e., without exhibiting an abrupt cutoff	 phenomenon) on  diffusive timescales. Our aim is to refine this picture, by analyzing two new features of the averaging process --- an early concentration phenomenon, and a fast local smoothness property --- and extract from them quantitative information on its mixing and scaling behaviors. After a quick recap on the averaging process and its main properties, we  present our findings first informally in Section \ref{sec:summary-results}, and then in detail in Section \ref{sec:results}.

\subsection{Setting and model definition}
For every integer $d\ge 1$, let $\T^d_N\eqdef \seq{\Z/N\Z}^d$ be the discrete $d$-dimensional torus of size $N\in \N$. Moreover, let $\cP(\T^d_N)$ denote the space of probability mass functions on $\T^d_N$, namely,
\begin{equation}
	\cP(\T^d_N)\eqdef \bigg\{\eta\in [0,1]^{\T^d_N} \,\bigg|\, \sum_{x\in \T^d_N}\eta(x)=1\bigg\}\fstop
\end{equation} 
For such a compact subset $\cP(\T^d_N)$  of $\R^{\T^d_N}$, we further let 
$\cC(\cP(\T^d_N))$ indicate the Banach space of continuous functions on $\cP(\T^d_N)$ endowed with the uniform norm.

The \textit{averaging process on $\T^d_N$} (shortly, ${\rm Avg}(\T^d_N)$) is the Markov jump process evolving on  $\cP(\T^d_N)$ and with infinitesimal generator $\cL=\cL_N$ given, for all $f\in \cC(\cP(\T^d_N))$, as 
\begin{equation}\label{eq:gen-avg}
	\cL f(\eta)\eqdef \frac12 \sum_{x\in \T^d_N}\sum_{\substack{y\in \T^d_N\\
			\abs{x-y}=1}}\big(f(\eta^{xy})-f(\eta)\big)\comma\qquad \eta \in \cP(\T^d_N)\fstop
\end{equation}
Here, $\abs{x-y}$ denotes the usual graph distance between vertices $x, y\in  \T^d_N$, 	while $\eta^{xy}\in \cP(\T^d_N)$ is obtained from $\eta\in \cP(\T^d_N)$ by updating the masses $\eta(x)$ and $\eta(y)$ with their average value: for all $x, y$ and $z \in \T^d_N$, 
\begin{align}\label{eq:eta-xy}
	\eta^{xy}(z)\eqdef \begin{dcases}
		\tfrac12 \tonde{\eta(x)+\eta(y)} &\text{if}\ z=x\ \text{or}\ z=y\\
		\eta(z) &\text{else}\fstop
	\end{dcases}
\end{align}

Henceforth,  although the updates occur at both random times and locations, as time runs,  the deterministic averages and the conservation of mass lead the (random) distribution  to become flat, for any mass initialization. In formulas,  this reads as
\begin{align}\label{eq:conv-qualitative}
	\eta^\xi_t\xrightarrow{t\to \infty}\pi\equiv 1/N^d\comma\qquad \xi \in \cP(\T^d_N)\comma
\end{align}
where $(\eta^\xi_t)_{t\ge 0}=(\eta^{\xi,N}_t)_{t\ge 0}$ denotes a trajectory of ${\rm Avg}(\T^d_N)$ when starting at time $t=0$ from $\xi\in \cP(\T^d_N)$,  $\pi=\pi_N$ is the uniform distribution on $\T^d_N$, and the above convergence holds a.s.\ with respect to the random sequence of updates.  For the probability law corresponding to such random updates, we write $\P=\P^N$, while $\E=\E^N$ stands for the corresponding expectation.

\subsection{Mixing for the averaging process}
The statement in \eqref{eq:conv-qualitative} concerns the qualitative long-run behavior of the averaging process on a fixed-size torus. A mixing analysis aims at quantifying such a convergence in the limit as $N\to \infty$, with the scope of   better capturing the relevant timescales governing the relaxation of  $\eta^\xi_t$, when starting from a worst-case initial condition $\xi\in \cP(\T^d_N)$.

As a sensible notion  of distance between  the law of $\eta_t^\xi$ and that of $\pi$,   we consider the  (mean) $L^p$-distance to equilibrium, given, for all $N\in \N$ and $p\in [1,2]$, by	
\begin{equation}
	\label{eq:wasserstein}	  \E\bigg[\bigg\|\frac{\eta_t^\xi}{\pi}-1\bigg\|_p^p\bigg]^{\frac1p}\eqdef \bigg(\sum_{x\in \T^d_N} \pi(x)\,\E\bigg[\bigg|\frac{\eta_t^\xi(x)}{\pi(x)}-1\bigg|^p\bigg]\bigg)^\frac1p\comma\qquad\text{with}\ \xi\in \cP(\T^d_N)\comma
\end{equation}
and define the corresponding (worst-case) $L^p$-mixing time as the first time $t\ge 0$ for which the above quantity falls below $1/2$ for all initial conditions $\xi\in \cP(\T^d_N)$. 
Note that 	  $\norm{\emparg}_p=\norm{\emparg}_{p,N}$ in \eqref{eq:wasserstein} stands for the usual $L^p$-norm on $(\T^d_N,\pi)$. Further, we remark that  \eqref{eq:wasserstein} may also be interpreted as the $L^p$-Wasserstein metrics between the random  $\eta_t^\xi \in \cP(\T^d_N)$ and the deterministic $\pi\in \cP(\T^d_N)$, when endowing $\cP(\T^d_N)$ with the distance induced by $\norm{\emparg}_p$.	Note that stronger distances,  e.g., total variation,  are less relevant in the context of the averaging process; indeed, the equilibrium $\pi$ is deterministic, while $\eta^\xi_t$ has a non-degenerate random  distribution at any time $t> 0$, provided that $\xi\neq \pi$. 

Within this setting, a worst-case mixing analysis for ${\rm Avg}(\T^d_N)$ has been carried out in \cite{quattropani2021mixing}. There, the authors show that mixing occurs gradually at times $t=\Theta(N^2)$. This result was actually derived for a larger class of graph sequences, referred to as \textquotedblleft finite-dimensional\textquotedblright\ \cite[Assumption 1]{quattropani2021mixing}, establishing in all these examples that mixing of the averaging process was dictated by that of the corresponding simple random walk on the same graph. Other settings for which sharp asymptotics for the $L^p$-mixing times, with $p=1,2$, have been established are  the hypercube \cite{caputo_quattropani_sau_cutoff_2023}, and  the complete and complete bipartite graphs \cite{chatterjee2020phase,caputo_quattropani_sau_cutoff_2023}. In these last three examples, cutoff occurs. For a quick overview on these results and on some general tools recently developed for the study of mixing times of the averaging process, we refer the interested reader to \cite[Section 2]{caputo_quattropani_sau_cutoff_2023}.

\subsection{Averaging process as a noisy heat flow}\label{sec:RW}
The averaging process may also be viewed as the probability distribution of a random walk $(U_t)_{t\ge 0}$ in a highly degenerate dynamic random environment, in which the time-dependent conductances between nearest neighbors are either zero, or  become \textquotedblleft instantaneously infinite\textquotedblright\ as soon as the Poisson clock associated to the pair  rings. At the occurrence of such an event, the walk originally sitting on one of the vertices will find itself  with equal probability on either one of the two nearest neighbors. 

With this interpretation, while a realization  of $\eta_t^\xi\in \cP(\T^d_N)$ coincides with the distribution of $U_t$ for a quenched realization of the random environment, the mean $\E[\eta^\xi_t]\in \cP(\T^d_N)$ shall be considered as the corresponding annealed law. As already observed in \cite[Lemma 1]{aldous_lecture_2012} and exploited in \cite{quattropani2021mixing,caputo_quattropani_sau_cutoff_2023} (see also \cite[Section 6.3]{tran2023cutoff} for a recent use of this fact),  $t\mapsto \E[\eta^\xi_t]$ describes the heat flow of ${\rm RW}(\T^d_N)$, the \textquotedblleft lazy\textquotedblright\ continuous-time simple random walk $(X_t)_{t\ge 0}$ on $\T^d_N$, with $X_0 \sim \xi$ and infinitesimal generator $L^{\rm RW}=L_N^{\rm RW}$ given, for all  $\psi\in \R^{\T^d_N}$, by
\begin{equation}\label{eq:generator-RW}
	L^{\rm RW} \psi(x)=\frac12 \sum_{\substack{y\in \T^d_N\\
			|x-y|=1}}\big(\psi(y)-\psi(x)\big)\comma\qquad  x\in \T^d_N\fstop
\end{equation}
Hence, letting $\Pr_\xi^{\rm RW}$ denote the law associated to this walk and  $\pi^\xi_t$ its distribution at time $t\ge 0$, we obtain
\begin{equation}\label{eq:mean}
	\E\big[\eta^\xi_t(x)\big]=\pi^\xi_t(x)\eqdef {\mathbf P}_\xi^{\rm RW}\big(X_t=x\big)\comma\qquad x\in \T^d_N\comma t \ge 0\comma \xi\in \cP(\T^d_N)\fstop
\end{equation} 
In other words, \eqref{eq:mean} states that the  \textquotedblleft noisy\textquotedblright\ jump process $\eta_t^\xi$ has the heat flow $\pi_t^\xi$ as its mean. Although the heat flow does not necessarily  identify the correct mixing behavior of the averaging process on \textit{all} graphs, we will prove that a strong form of concentration of $\eta_t^\xi $ around $\pi^\xi_t$ holds on	 $\T^d_N$.

\subsection{Summary of main results}\label{sec:summary-results}
Let us provide a preliminary informal description of the two main features of ${\rm Avg}$ that we investigate in this article, together with some of their applications. In what follows, the standard asymptotic notation \textquotedblleft $O(\emparg)$\textquotedblright, \textquotedblleft$o(\emparg)$\textquotedblright, \textquotedblleft$\Theta(\emparg)$\textquotedblright,  \textquotedblleft$\ll$\textquotedblright, etc., refers to the limit $N\to \infty$.

In what follows, we  focus 	 on worst-case initial conditions $\xi \in \cP(\T^d_N)$, which, by simple convexity arguments (see, e.g., \cite[Section 2.3]{caputo_quattropani_sau_cutoff_2023}), are all Dirac measures: for all $t\ge 0$,
\begin{equation}\label{eq:worst-case}
	\sup_{\xi\in \cP(\T^d_N)}\E\bigg[\bigg\|\frac{\eta_t^\xi}{\pi}-1\bigg\|_p^p\bigg]^\frac1p= \sup_{x\in \T^d_N} \E\bigg[\bigg\|\frac{\eta_t(x,\emparg)}{\pi}-1\bigg\|_p^p\bigg]^\frac1p\fstop
\end{equation}
(The  analogous identity for ${\rm RW}(\T^d_N)$ is well-known.)
Here and all throughout, we write $\eta_t(x,\emparg)=\eta_t^\xi$ and, similarly, $\pi_t(x,\emparg)=\pi_t^\xi$, whenever $\xi=\car_x$, for some $x\in \T^d_N$. Finally, in the translation-invariant context of the torus $\T^d_N$, $\E\big[\big\|\frac{\eta_t(x,\emparg)}{\pi}-1\big\|_p^p\big]$ and $\big\|\frac{\pi_t(x,\emparg)}{\pi}-1\big\|_p^p$ do not depend on $x\in \T^d_N$; hence, for notational convenience, we consider $\eta_t(0,\emparg)$ and $\pi_t(0,\emparg)$, i.e., we fix the location of the initial mass as  $x=0\in \T^d_N$ all throughout.

\subsubsection{Early concentration phenomenon} \label{sec:summary-concentration}

The  idea adopted in \cite{quattropani2021mixing} in order to identify the correct timescale at which the averaging process mixes is, in a nutshell, the following. First, identify the timescale at which ${\rm RW}$ mixes; there, compare the distance-to-equilibrium of ${\rm Avg}$ with the one of ${\rm RW}$, and prove that the resulting error becomes arbitrarily small on that same timescale.	 

More in detail, this strategy starts from two estimates. The first one --- the lower bound in \cite[Lemma 5.1]{quattropani2021mixing}, which  is an immediate consequence of Jensen inequality and \eqref{eq:mean} --- states that mixing for $\eta_t(0,\emparg)$ does not occur before that of $\pi_t(0,\emparg)$: for all $p\in [1,2]$  and $t\ge 0$,
\begin{align}\label{eq:general-lb}
	\bigg\|\frac{\pi_t(0,\emparg)}{\pi}-1\bigg\|_p\le 	\E\bigg[\bigg\|\frac{\eta_t(0,\emparg)}{\pi}-1\bigg\|_p^p\bigg]^{\frac1p}\fstop
\end{align}
The second one --- a slight sharpening of the upper bound in \cite[\S5.2]{quattropani2021mixing} 	 ---  follows by the triangle inequality for the $L^p$-Wasserstein metrics in \eqref{eq:wasserstein} and Jensen inequality: for all $p\in [1,2]$  and $t\ge 0$, 
\begin{align}\label{eq:general-ub}
	\E\bigg[\bigg\|\frac{\eta_t(0,\emparg)}{\pi}-1\bigg\|_p^p\bigg]^{\frac1p}
	&\le \bigg\|\frac{\pi_t(0,\emparg)}{\pi}-1\bigg\|_p + 
	\E\bigg[\bigg\|\frac{\eta_t(0,\emparg)}{\pi}-\frac{\pi_t(0,\emparg)}{\pi}\bigg\|_2^2\bigg]^{\frac12}\fstop
\end{align}
In view of these two bounds, if one expects the mixing of $\eta_t(0,\emparg)$ to be dictated by that of $\pi_t(0,\emparg)$, the key to get matching lower  and  upper bounds in \eqref{eq:general-lb}--\eqref{eq:general-ub} consists in efficiently estimating the second term on the right-hand side of \eqref{eq:general-ub}.  

By implementing this strategy for the specific example of $\T^d_N$, since $\pi_t(0,\emparg)$ mixes gradually at times $t=\Theta(N^2)$, showing
\begin{equation}\label{eq:vanishing-qualitative}
	\lim_{n\to \infty}	\E\bigg[\bigg\|\frac{\eta_{t_N}(0,\emparg)}{\pi}-\frac{\pi_{t_N}(0,\emparg)}{\pi}\bigg\|_2^2\bigg]=0\comma\qquad \text{for all}\ t_N\gg N^2\comma
\end{equation}
readily implies that also $\eta_t(0,\emparg)$ mixes at times $t=\Theta(N^2)$.
Proving \eqref{eq:vanishing-qualitative} is the main step in the proofs of \cite[Proposition 2.8 \& Theorem 2.9]{quattropani2021mixing}.

Our first main goal is the following improvement of \eqref{eq:vanishing-qualitative}:	
\begin{equation}\label{eq:concentration-summary}
	\lim_{n\to \infty} \E\bigg[\bigg\|\frac{\eta_{t_N}(0,\emparg)}{\pi}-\frac{\pi_{t_N}(0,\emparg)}{\pi}\bigg\|_2^2\bigg]=0\comma\qquad\text{for all}\  t_N\gg N^\frac{2d}{d+2}\fstop
\end{equation}
Since $N^{2d/(d+2)}\ll N^2$ for any $d\ge 1$, \eqref{eq:concentration-summary} captures a new timescale --- dimension-dependent, but always shorter than the diffusive one ---  after which $\eta_t(0,\emparg)$ and its expectation $\pi_t(0,\emparg)$ stay close to each other, although both of them are still far from stationarity.  For this reason, we refer to such a property of the averaging process as an \emph{early concentration phenomenon}. For the precise quantitative version of this result, see Theorem \ref{th:concentration} below.

In view of the bounds \eqref{eq:general-lb} and \eqref{eq:general-ub} --- which hold true on any graph --- early concentration of ${\rm Avg}$ may be considered as a  stronger form of mixing, whenever ${\rm Avg}$ and ${\rm RW}$ mix on the same timescale. On the one hand, this suggests that, just like the mixing behaviors of ${\rm Avg}$ and ${\rm RW}$ are comparable on a large class of geometries (see, e.g., \cite[Proposition 2.8]{quattropani2021mixing} and \cite[Theorem 1.1]{caputo_quattropani_sau_cutoff_2023}),  early concentration should not be an exclusive instance of the example of ${\rm Avg}(\T^d_N)$ considered here, and should be investigated in settings in which scaling limits of discrete gradients of the underlying heat flow are available. On the other hand, this automatically excludes early concentration from settings like the complete and complete bipartite graphs \cite{chatterjee2020phase,caputo_quattropani_sau_cutoff_2023}, in which mixing of ${\rm Avg}$ occurs on a strictly longer timescale than that of ${\rm RW}$. For what concerns $L^2$-mixing, this is further explained by the following identity, which resembles the inequality in \eqref{eq:general-ub} with $p=2$:
for all $t\ge 0$,
\begin{equation}\label{eq:identity-l2}
	\E\bigg[\bigg\|\frac{\eta_t(0,\emparg)}{\pi}-1 \bigg\|_2^2\bigg] = \bigg\|\frac{\pi_t(0,\emparg)}{\pi}-1\bigg\|_2^2 + \E\bigg[\bigg\|\frac{\eta_t(0,\emparg)}{\pi}-\frac{\pi_t(0,\emparg)}{\pi}\bigg\|_2^2\bigg]\fstop
\end{equation}
The  proof of \eqref{eq:identity-l2}  combines the definition of $L^2$-norm in \eqref{eq:wasserstein} and the identity in \eqref{eq:mean}.

A quantitative version of \eqref{eq:concentration-summary} bears several useful consequences related to both mixing and scaling limits of ${\rm Avg}(\T^d_N)$. 
For instance, as a refinement of mixing, 	 our estimate combined with a quantitative local CLT for ${\rm RW}(\T^d_N)$ identifies the \textit{limit profile} for ${\rm Avg}(\T^d_N)$: for all $p\in [0,1]$ and $t>0$,
\begin{equation}
	\E\bigg[\bigg\|\frac{\eta_{tN^2}(0,\emparg)}{\pi}-1\bigg\|_p^p\bigg]^\frac1p = \norm{h_t(0,\emparg)-1}_{L^p(\T^d)}+O\bigg(\frac1N\bigg)\comma
\end{equation}
where $h_t(0,\emparg)$ is the heat kernel of the diffusion on $\T^d$ with generator $\frac12\Delta$.  We detail this result and its simple proof in Proposition \ref{pr:limit-profile} below. 
For related recent results about limit profiles also in absence of cutoff, see, e.g., \cite{barrera2022gradual}.

Along the same lines, \eqref{eq:concentration-summary} with $t_N=tN^2$ may be also interpreted as a \emph{quantitative hydrodynamic limit} for ${\rm Avg}(\T^d_N)$ (Corollary \ref{cor:HDL}). Here,  the heat equation $\partial_t \rho=\frac12 \Delta \rho$ on $\T^d$ arises as a hydrodynamic equation, and convergence is	 established in the stronger $L^p$-Wasserstein metrics rather than in probability, as most typically done (see, e.g., the classical monograph \cite{kipnis_scaling_1999}).

\subsubsection{Fast local smoothness} Mixing of the averaging process typically deals with measuring how far the averaging process $\eta_t(0,\emparg)$ is from its global equilibrium $\pi$. Next to this measure of distance-to-equilibrium, Aldous and Lanoue \cite[Section 2.3]{aldous_lecture_2012} proposed a natural notion of \emph{local smoothness}, aiming at quantifying more accurately the local flatness of $\eta_t(0,\emparg)$. As intuitively clear, local smoothness should occur at least as fast as global mixing and, as we will show in Theorem \ref{th:concentration}, is intimately related in our context to the phenomenon of early concentration discussed in the previous paragraph. Nevertheless,  analyzing local smoothness in general settings turns out to be a delicate issue,   and, compared to the range of tools recently developed for the study of global mixing for ${\rm Avg}$, techniques and results for  local mixing are rather limited  (see, e.g., \cite[Proposition 4]{aldous_lecture_2012} and \cite[Corollary 1]{berthier_tight_2020}).

Following \cite{aldous_lecture_2012} and the analogy with the corresponding heat flow, we employ the (mean) Dirichlet form to quantify local roughness of the averaging process. Indeed, while  $L^p$-distances are natural quantities to measure  the  distance to equilibrium of $\pi_t(0,\emparg)$, the Dirichlet form associated to ${\rm RW}(\T^d_N)$ with generator $L^{\rm RW}$ \eqref{eq:generator-RW},  given by
\begin{equation}
	\cE(\psi)\eqdef \frac{1}{4N^d}\sum_{x\in \T^d_N}\sum_{\substack{y\in \T^d_N\\
			|x-y|=1}}\tonde{\psi(x)-\psi(y)}^2\comma\qquad \psi\in \R^{\T^d_N}\comma
\end{equation} 
relates to an infinitesimal, thus, local, stage of the convergence of $\pi_t(0,\emparg)$, as encoded in
\begin{equation}\label{eq:infinitesimal-RW}
	\frac{\rm d}{{\rm d}t}\bigg\|\frac{\pi_t(0,\emparg)}{\pi}-1\bigg\|_2^2 = -2\, \cE\bigg(\frac{\pi_t(0,\emparg)}{\pi}\bigg)\comma\qquad t \ge0\fstop
\end{equation}
In \cite[Proposition 2]{aldous_lecture_2012}, the authors prove an analogous identity for $\eta_t(0,\emparg)$:
\begin{equation}\label{eq:infinitesimal-AL}
	\frac{\dd}{\dd t}\E\bigg[\bigg\|\frac{\eta_t(0,\emparg)}{\pi}-1\bigg\|_2^2\bigg] = 	-\E\bigg[\cE\bigg(\frac{\eta_t(0,\emparg)}{\pi}\bigg)\bigg]\comma\qquad t\ge 0\fstop
\end{equation}
These two relations are, indeed, similar, although differing by a factor $2$; this dissimilarity is reflected by different exponential decay rates for the (mean) $L^2$-distances. 
Nevertheless, both \eqref{eq:infinitesimal-RW} and \eqref{eq:infinitesimal-AL}  show that quantifying the decay rates of the terms on the right-hand side	implies refined mixing results for $\pi_t(0,\emparg)$ and $\eta_t(0,\emparg)$, respectively.

A first simple estimate for the (mean) Dirichlet form along the averaging process may be obtained via a convexity argument as done to derive \eqref{eq:general-lb}. Indeed,  Jensen inequality and the identity \eqref{eq:mean} yield the following lower bound: 
\begin{align}\label{eq:dir-form-trivial-lb}
	\cE\bigg(\frac{\pi_t(0,\emparg)}{\pi}\bigg)\le \E\bigg[\cE\bigg(\frac{\eta_t(0,\emparg)}{\pi}\bigg)\bigg]\comma\qquad t \ge 0\fstop
\end{align}
Our main contribution is to show that, for ${\rm Avg}(\T^d_N)$, the inequality in \eqref{eq:dir-form-trivial-lb} can actually be reversed as follows
\begin{equation}\label{eq:dir-form-ub}
	\E\bigg[\cE\bigg(\frac{\eta_t(0,\emparg)}{\pi}\bigg)\bigg]\le C\, \phi(t/N^d)\, \cE\bigg(\frac{\pi_t(0,\emparg)}{\pi}\bigg)\comma \qquad t \ge 0\comma
\end{equation} at the negligible cost of including some dimension-dependent constant $C=C(d)>0$ and a term $\phi(t/N^d)$  which diverges only for very large 	times $t\gg N^d$.  Ultimately, since the Dirichlet form along the heat flow $\pi_t(0,\emparg)$ may be efficiently estimated on $\T^d_N$, this yields a quantitative control for the local smoothness of ${\rm Avg}(\T^d_N)$ \emph{for all times $t\ge 0$}. This allows, in particular, to capture both the fast local smoothening (on the same timescale at which early concentration in \eqref{eq:concentration-summary} occurs) of ${\rm Avg}(\T^d_N)$, as well as the correct exponential contraction rate for large times. As an immediate consequence of this, we obtain, by integrating over time this bound, the corresponding control for the global distance-to-equilibrium (Corollary \ref{cor:l2-bound}). The latter result and \eqref{eq:dir-form-ub} are both employed in Section \ref{sec:mixing-bin}.

The bound in \eqref{eq:dir-form-ub} and an improved version of \eqref{eq:dir-form-trivial-lb} are the content of Theorem \ref{th:local-conv}, and together represent the first sharp results concerning local smoothness for the averaging process. It is worth to emphasize that an analogous result, although for a different model, has been recently proven by Banerjee and Burdzy  in \cite[Theorem 2.6]{banerjee2020rates}. There, the authors study (among other things) the  \emph{smoothing process} on $\T^d_N$, a model discussed by  Liggett within the class of linear systems \cite[Chapter IX]{liggett_interacting_2005-1}. Like the averaging process, also the smoothing process falls into the class of Markovian mass redistribution models. However, since averages are performed over one vertex at the time, the smoothing dynamics does not conserve mass. This seemingly marginal variation leads to some qualitative differences between the two processes. For instance,  equilibrium for the smoothing process is a truly random mass profile, while this is not the case for the averaging. Moreover, as proved in \cite[Proposition 2.5]{banerjee2020rates}, averaging over a single vertex deterministically does not increase the value of $\cE(\frac{\eta_t}{\pi})$; as simple examples show, this monotonicity property  does not hold when performing our edge-averaging dynamics.  Nevertheless,   despite these dissimilarities, we	 import some of their arguments into our analysis, yielding comparable local smoothness behaviors of the two processes when considered on $\T^d_N$.

As already anticipated, fast local smoothness comes with some relevant applications. One of them, and probably the most important, is the early concentration phenomenon (Theorem \ref{th:concentration}). As a second application, we show  how to transfer the integral information encoded in the (mean) Dirichlet form into \textit{pointwise gradient estimates} for ${\rm Avg}(\T^d_N)$. This result is presented in Proposition \ref{pr:grad-estimates}, and may be viewed as an analogue of the annealed gradient estimates for random walks in dynamic random environment recently obtained in  \cite{deuschel2023gradient}.

%

\section{Main results}\label{sec:results}
We are now ready to present our main results and some of their consequences in full detail. Before that, recalling the definition of the random walk  ${\rm RW}(\T^d_N)$ (Section \ref{sec:RW}), we define $\lambda=\lambda^N>0$ as its \textit{spectral gap}, namely the smallest non-zero eigenvalue of  the (negative) generator $-L^{\rm RW}$ given in \eqref{eq:generator-RW}. It is well-known that $\lambda= 1-\cos(2\pi/N)=\frac{2\pi^2}{N^2}\tonde{1+O(\frac1{N^2})}$. Finally,  we write $t_{\rm rel}=t_{\rm rel}^N=1/\lambda=\frac{N^2}{2\pi^2}\tonde{1+O(\frac1{N^2	})}$ for the corresponding \textit{relaxation time}. 

In what follows,  $B, C, C_1, C_2, \ldots, C_1', C_2',\ldots > 0$ denote constants whose exact value is unimportant and may change from line to line. Moreover, unless stated otherwise, such constants may depend on $d\ge 1$, but not on other variables, e.g., $N\in \N$ and $t\ge 0$.  

\subsection{Local smoothness and gradients estimates}

The following is the first of our main results.

\begin{theorem}[Local smoothness]\label{th:local-conv}
	For all $d\ge1$, for all $N\in \N$ large enough, and for all $t\ge 0$, we have
	\begin{equation}\label{eq:local-conv}
		\left(1+C_1(t\wedge 1)\right)\cE\bigg(\frac{\pi_t(0,\emparg)}{\pi}\bigg)\le \E\bigg[\cE\bigg(\frac{\eta_t(0,\emparg)}{\pi}\bigg)\bigg]\le C_2 \exp\tonde{\frac{Bt}{N^{d+2}}} \cE\bigg(\frac{\pi_t(0,\emparg)}{\pi}\bigg)\comma
	\end{equation}
	for some constants $B, C_1, C_2> 0$ (depending only on $d\ge1$).
\end{theorem}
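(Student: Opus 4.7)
The starting point is the orthogonal decomposition of the mean Dirichlet form into the Dirichlet form of the mean plus that of the fluctuations. Setting $g_t \eqdef (\eta_t(0,\emparg)-\pi_t(0,\emparg))/\pi$, which is zero-mean, and expanding the square in the definition of $\cE$, one obtains
\[
\E\!\left[\cE\!\left(\frac{\eta_t(0,\emparg)}{\pi}\right)\right] \;=\; \cE\!\left(\frac{\pi_t(0,\emparg)}{\pi}\right) \;+\; \E[\cE(g_t)]\fstop
\]
Hence Theorem~\ref{th:local-conv} reduces to sandwiching $\E[\cE(g_t)]$ between $C_1(t\wedge 1)\cE(\pi_t(0,\emparg)/\pi)$ and $(C_2 e^{Bt/N^{d+2}}-1)\cE(\pi_t(0,\emparg)/\pi)$.

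For the lower bound, the trivial inequality $\E[\cE(g_t)]\ge 0$ is exactly the estimate in \eqref{eq:dir-form-trivial-lb}; to improve it by the $O(t\wedge 1)$ correction I would inspect the initial production of edge fluctuations. Since $g_0\equiv 0$, a first-order generator expansion shows that each edge $e=\{x,y\}$ rings at rate $1$ and, when it does, injects a positive amount of variance of $g_t(x)-g_t(y)$ that for the Dirac initial condition $\delta_0$ is proportional to $(\pi_0(x)-\pi_0(y))^2$. Summing over edges gives $\E[\cE(g_t)]\ge c\,t\,\cE(\pi_0/\pi)$ for $t\le 1$, and the near-constancy of $\cE(\pi_t(0,\emparg)/\pi)$ on $[0,1]$, itself an easy consequence of \eqref{eq:infinitesimal-RW}, upgrades this to the claimed bound with $\cE(\pi_t(0,\emparg)/\pi)$ on the right. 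For $t\ge 1$ the factor $1+C_1(t\wedge 1)$ is $O(1)$ and can be absorbed into the upper bound derived below.

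The upper bound is the substantive step. I would study the two-point correlation $K_t(x,y)\eqdef\E[\eta_t(x)\eta_t(y)]$. A direct application of $\cL$ to the product $\eta(x)\eta(y)$, splitting each edge $e=\{u,v\}$ into the cases disjoint from, incident to, or equal to $\{x,y\}$, yields
\[
\partial_t K_t(x,y) \;=\; \ttonde{L^{\rm RW}_1+L^{\rm RW}_2}\,K_t(x,y) \;+\; \cS_t(x,y)\comma
\]
where $L^{\rm RW}_i$ is \eqref{eq:generator-RW} acting on the $i$-th variable and the source $\cS_t$ is supported on the near-diagonal $\{|x-y|\le 1\}$; in particular for $|x-y|=1$ one finds $\cS_t(x,y)=-\tfrac14\E[(\eta_t(x)-\eta_t(y))^2]$. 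Since $\pi_t\otimes\pi_t$ solves the source-free version of the same equation, the covariance $M_t\eqdef K_t-\pi_t\otimes\pi_t$ satisfies the identical equation, and Duhamel's formula represents $M_t$ as a time integral of $\cS_s$ against the two-particle transition kernel of two independent copies of ${\rm RW}(\T^d_N)$. Summing over nearest-neighbour pairs then yields an identity of the schematic form
\[
\E[\cE(g_t)]\;=\;\int_0^t \cK_{t-s}\Big(\E[\cE(\eta_s/\pi)],\,\cE(\pi_s/\pi)\Big)\,\dd s\comma
\]
with $\cK_u$ controlled by the near-diagonal two-particle heat kernel at time $u$. Standard on-diagonal heat-kernel estimates for ${\rm RW}(\T^d_N)$ --- Nash's inequality in the sub-diffusive regime $u\le N^2$ and the spectral gap $\lambda\asymp N^{-2}$ beyond --- provide a bound of order $(u\wedge N^2)^{-d/2}/N^d$, and inserting it converts the identity into a Gronwall-type inequality for the ratio $\rho(t)\eqdef\E[\cE(\eta_t/\pi)]/\cE(\pi_t/\pi)$. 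Integrating $\rho'\lesssim N^{-(d+2)}\rho$ produces the multiplicative factor $C_2 e^{Bt/N^{d+2}}$; the exponent $d+2$ encodes the $N^{-d}$ volume penalty of localising the source on an $O(N^d)$-sized near-diagonal and an extra $\lambda\asymp N^{-2}$ from the $L^2$ normalisation of $\cE$.

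The main obstacle is to close this self-referential inequality: the source of $M_t$ on $\{|x-y|=1\}$ is itself $\E[(\eta_t(x)-\eta_t(y))^2]$, precisely the quantity we seek to bound in aggregate. This is the same difficulty encountered by Banerjee--Burdzy \cite{banerjee2020rates} for the smoothing process, and I would adapt their monotonicity/resampling arguments, compensating for the loss of deterministic edge-wise monotonicity of $\cE(\eta/\pi)$ under our averaging updates (as noted in the discussion after \eqref{eq:dir-form-ub}) by tracking the non-monotone terms through the Duhamel integral and absorbing them on the right-hand side of the Gronwall inequality.
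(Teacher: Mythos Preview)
Your two--point--function/Duhamel scheme is essentially the same engine the paper uses, just phrased differently: the paper passes through duality with the coupled walks $(X_t,Y_t)$, reduces to the \emph{difference process} $Z_t=X_t-Y_t$ (a single walk on $\T^d_N$ with a defect at the origin), and obtains an exact renewal equation
\[
u(t)=g(t)+\int_0^t u(s)\,f(t-s)\,\dd s\comma\qquad u=\tfrac1{dN^d}\E\!\big[\cE(\eta_t/\pi)\big]\comma\ g=\tfrac1{dN^d}\cE(\pi_t/\pi)\comma
\]
with an explicit non-negative kernel $f$ built from transition probabilities of two \emph{independent} walks. Your Duhamel identity is this equation in disguise. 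The payoff of making the renewal structure explicit is that it delivers \emph{both} inequalities at once from the series $u=\sum_{k\ge 0}g*f^{*k}$, and this is where your proposal has two gaps.

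\textbf{Lower bound for $t\ge 1$.} The sentence ``the factor $1+C_1(t\wedge 1)$ is $O(1)$ and can be absorbed into the upper bound'' is a non sequitur: for $t\ge 1$ the claim is $\E[\cE(g_t)]\ge C_1\,\cE(\pi_t/\pi)$, a genuine lower bound on the fluctuation energy that no upper bound can provide. One must show that the edge fluctuations created at early times do not dissipate faster than the deterministic gradient. In the paper this is automatic from the renewal series: all terms are non-negative, and keeping $k=0,1$ with the monotonicity of $g$ and the uniform positivity of $f$ on $[0,1]$ gives $u(t)\ge g(t)\big(1+\int_0^{t\wedge 1}f\big)\ge (1+c(t\wedge1))g(t)$ for all $t$. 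Your short-time expansion does not supply this persistence.

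\textbf{Upper bound closure.} A literal Gronwall $\rho'\lesssim N^{-(d+2)}\rho$ does not follow from the convolution identity: the kernel $f(t-s)$ is not of order $N^{-(d+2)}$ but behaves like $(t-s)^{-(d/2+1)}$ for $1\lesssim t-s\lesssim N^2$ and only settles to $O(N^{-(d+2)}e^{-2(t-s)/t_{\rm rel}})$ afterwards. The paper's route is to prove the pointwise bound $f\le (d+1/2)g$, dominate $u$ by $\tilde u=\sum_{k\ge 1}\tilde g^{*k}$ with $\tilde g=(d+1/2)g$, and then control this convolution series by the Banerjee--Burdzy mechanism; the decisive input is $\int_0^\infty e^{2t/t_{\rm rel}-Bt/N^{d+2}}\tilde g(t)\,\dd t<1$ for suitable $B$, whose iteration is what manufactures the factor $e^{Bt/N^{d+2}}$. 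So your instinct to invoke \cite{banerjee2020rates} is exactly right, but the argument operates on the renewal series, not on a differential inequality for the ratio $\rho$.
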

Although we formulated the above result as a comparison between the (mean) Dirichlet form of ${\rm Avg}(\T^d_N)$ and that of  ${\rm RW}(\T^d_N)$, the latter quantity is rather explicit thanks to the full knowledge of eigenvalues and eigenfunctions for ${\rm RW}(\T^d_N)$ (see also Section \ref{sec:proof-local-conv-renewal} below for more details). Indeed, one obtains, for all $N\in \N$ large enough and for all $t \ge 0$,
\begin{equation}\label{eq:estimate-dir-RW}
	C'_1\, \Xi(t)\le \cE\bigg(\frac{\pi_t(0,\emparg)}{\pi}\bigg)\le C'_2\, \Xi(t)\comma\qquad \text{with}\ \Xi(t)\eqdef \frac{N^d\exp\tonde{-2t/t_{\rm rel}}}{\tonde{N^{d+2}\wedge t^{d/2+1}}\vee 1}\comma
\end{equation}
for some constants $C'_1,C'_2>0$  (depending only on $d\ge1$). 

Plugging the bounds in \eqref{eq:estimate-dir-RW} into \eqref{eq:local-conv} allows to quantify the fast local smoothness of ${\rm Avg}(\T^d_N)$. Not only there follows that
\begin{equation}
	\E\bigg[\cE\bigg(\frac{\eta_{t_N}(0,\emparg)}{\pi}\bigg)\bigg] = \Theta\tonde{\frac1{N^2}}\comma\qquad \text{for all}\ t_N=\Theta(N^2)\comma
\end{equation} but we also extract quantitative information for $t\ll N^2$ and $t\gg N^2$. In particular, as long as $t=O(N^2)$, the resulting upper bound states that $\eta_t(0,\emparg)$ gets locally smoother at the polynomial scale $t^{-(d/2+1)}$  (just like $\pi_t(0,\emparg)$). For larger times, instead, local smoothness becomes exponential, with rate  $2/t_{\rm rel}\left(1+o(1)\right)$, thus, crucially recovering the factor $2$ in the exponential decay rate (cf.\ \eqref{eq:infinitesimal-RW} and \eqref{eq:infinitesimal-AL}). 
Let us further remark that, since $t_{\rm rel}=\frac{N^2}{2\pi^2}\tonde{1+o(1)}=o(N^{d+2})$, the exponential factor in the upper bound of \eqref{eq:local-conv} plays no significant role as long as $t=O(N^{d+2})$, a timescale much longer than the diffusive one. 
As for the lower bound in \eqref{eq:local-conv}, the factor $\tonde{1+C_1\tonde{t\wedge 1}}$ is  a seemingly irrelevant improvement upon \eqref{eq:dir-form-trivial-lb}. However, this term is strictly larger than one as soon as $t>0$, suggesting that, even  after a very small time, the fluctuations of the random gradients of $\eta_t(0,\emparg)$ become comparable with their mean.

Local smoothness is an integral (mean) quantity regarding the gradients of the averaging process. As a consequence of the estimate in Theorem \ref{th:local-conv}, we derive pointwise (rather than integral) estimates for such gradients. 
We remark that, keeping in mind the interpretation from Section \ref{sec:RW} of $\eta_t(x,y)$ as the distribution of a random walk in a  dynamic random environment, such bounds may be thought of as the analogues (for $p=2$) of the annealed gradient estimates recently obtained in \cite[Theorem 1.6(i)]{deuschel2023gradient} for the time-dependent random conductance model with space-time ergodic and uniformly elliptic environment on $\Z^d$.  We emphasize that, although our result resembles that in \cite{deuschel2023gradient}, the two proofs are quite different. 
We postpone the proof of the following result to Section \ref{sec:proofs-propositions}.
\begin{proposition}[Pointwise gradient estimates]\label{pr:grad-estimates}
	For all $d\ge 1$,  for all $N\in \N$ large enough, for all $t\ge 0$, and for all $x, y\in \T^d_N$ with $|x-y|=1$, we have 
	\begin{align}
		\E\big[\big(\eta_t(0,x)-\eta_t(0,y	)\big)^2\big] \le  \frac{C}{\tonde{N^{2d+2}\wedge t^{d+1}}\vee 1} \exp\tonde{-2t/t_{\rm rel}+	\frac{Bt}{N^{d+2}}}\comma
	\end{align}
	for some constants $B, C> 0$ (depending only on $d\ge 1$).
\end{proposition}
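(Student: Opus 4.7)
The target bound factorises as $\exp(Bt/N^{d+2})$ --- the exponential correction already appearing in Theorem~\ref{th:local-conv} --- times precisely the square of the pointwise heat-kernel gradient on $\T^d_N$. My plan is therefore to peel off the deterministic heat-kernel contribution and to control the remaining fluctuation via Theorem~\ref{th:local-conv}. The cleanest vehicle is the moment duality: viewing $\eta_t(0,\emparg)$ as the quenched distribution of a random walk $U$ in the dynamic random environment given by the Poisson clocks (cf.\ Section~\ref{sec:RW}) and introducing two independent copies $U^1,U^2$ of $U$ in the same environment yields
\[
\E\big[(\eta_t(0,x)-\eta_t(0,y))^2\big]=K_t((x,x),(0,0))+K_t((y,y),(0,0))-2K_t((x,y),(0,0)),
\]
where $K_t$ is the transition kernel of the two-particle process $(U^1,U^2)$: each marginal is a lazy random walk with generator $L^{\rm RW}$, and the pair interacts only when both particles sit at the endpoints of the same ringing edge, where they jointly re-randomise on that edge.

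If the two particles evolved independently, the right-hand side would equal $(\pi_t(0,x)-\pi_t(0,y))^2$, the square of the heat-kernel gradient on $\T^d_N$. I would estimate this ``free'' contribution pointwise by a direct Fourier computation: the expansion $\pi_t(0,z)=N^{-d}\sum_k e^{-\lambda_k t}\phi_k(-z)$ with $\phi_k(x)=e^{2\pi i k\cdot x/N}$, combined with the inequality $|\phi_k(x)-\phi_k(y)|^2\le 2\lambda_k$ available for $|x-y|=1$ and the Gaussian-type lattice sums $\sum_k e^{-\lambda_k t}$ and $\sum_k\lambda_k e^{-\lambda_k t}$, produces via Cauchy--Schwarz with a suitably chosen weight exactly the shape $C\,e^{-2t/t_{\rm rel}}/((N^{2d+2}\wedge t^{d+1})\vee 1)$.

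What remains is the interaction correction $\E[(\eta_t(0,x)-\eta_t(0,y))^2]-(\pi_t(0,x)-\pi_t(0,y))^2=\E[((\eta_t-\pi_t)(0,x)-(\eta_t-\pi_t)(0,y))^2]$. Its edge-summed version, by the bilinearity of $\cE$ and $\E[\eta_t]=\pi_t$, is
\[
\E[\cE((\eta_t-\pi_t)/\pi)]=\E[\cE(\eta_t/\pi)]-\cE(\pi_t/\pi)\le C\ttonde{e^{Bt/N^{d+2}}-1}\cE(\pi_t/\pi)
\]
by Theorem~\ref{th:local-conv}, which carries already the correct exponential factor. The main obstacle --- and the step I expect to require the most careful work --- is to upgrade this edge-integrated bound to a pointwise one: the naive ``single-edge vs.\ sum'' comparison loses a factor $N^d\wedge t^{d/2}$ relative to what the proposition requires. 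I would recover this missing factor by analysing $K_t$ near the diagonal $\{(z,z):z\in\T^d_N\}$, using Gaussian heat-kernel estimates for the lazy random walk to control the independent-motion part and the comparison supplied by Theorem~\ref{th:local-conv} to bound the interaction contribution uniformly in the edge $\{x,y\}$; combined with the heat-kernel gradient estimate of the preceding step, this closes the argument.
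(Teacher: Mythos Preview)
Your decomposition into the ``free'' contribution $(\pi_t(0,x)-\pi_t(0,y))^2$ and the variance $\E[((\eta_t-\pi_t)(0,x)-(\eta_t-\pi_t)(0,y))^2]$ is correct, as is your bound on the free part and on the edge-summed variance via Theorem~\ref{th:local-conv}. The gap is precisely where you place it, and your proposed resolution does not close it: Theorem~\ref{th:local-conv} is an edge-averaged statement and cannot, by itself, supply a bound uniform in the edge $\{x,y\}$; ``analysing $K_t$ near the diagonal'' with Gaussian estimates for the free walk tells you about the independent part, not about the pointwise size of the interaction correction. You have traded one pointwise problem for another of the same difficulty.

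The paper bypasses the decomposition altogether. It uses the quenched Chapman--Kolmogorov identity
\[
\eta_{0,t}(0,x)-\eta_{0,t}(0,y)=\sum_z \eta_{0,s}(0,z)\big(\eta_{s,t}(z,x)-\eta_{s,t}(z,y)\big)\comma
\]
applies Cauchy--Schwarz with respect to the probability measure $\eta_{0,s}(0,\cdot)$, and exploits independence of the Poisson updates on $(0,s)$ and $(s,t)$ to obtain
\[
\E\big[(\eta_t(0,x)-\eta_t(0,y))^2\big]\le \Big(\sup_z \pi_s(0,z)\Big)\sum_{z\in\T^d_N}\E\big[(\eta_{t-s}(z,x)-\eta_{t-s}(z,y))^2\big]\fstop
\]
By the duality \eqref{eq:dual-CRW} and reversibility of ${\rm CRW}(\T^d_N)$, the sum over $z$ equals $\frac{2}{dN^d}\,\E[\cE(\eta_{t-s}(0,\cdot)/\pi)]$ and depends on $x,y$ only through $|x-y|=1$: edge-uniformity comes for free. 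The pointwise factor is the on-diagonal heat-kernel bound $\sup_z\pi_s(0,z)\le C((s^{d/2}\wedge N^d)\vee 1)^{-1}$; taking $s=t/2\wedge N^2$ and applying Theorem~\ref{th:local-conv} together with \eqref{eq:estimate-dir-RW} to the Dirichlet-form factor finishes the proof. The ingredient missing from your outline is exactly this intermediate-time splitting, which converts the pointwise gradient at time $t$ into an edge-averaged one at time $t-s$ times a heat-kernel sup-norm.
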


Let us conclude this discussion on local smoothness with an important result on mixing of ${\rm Avg}(\T^d_N)$, which will turn out useful in Section \ref{sec:mixing-bin}, and which is immediately derived by integrating the infinitesimal relation \eqref{eq:infinitesimal-AL} over the time interval $[t,+\infty)$, combined with the upper bounds in \eqref{eq:local-conv} and \eqref{eq:estimate-dir-RW}.  
\begin{corollary}\label{cor:l2-bound}
	For all $d\ge1$, for all $N\in \N$ large enough, and for all $t\ge 0$, we have
	\begin{equation}
		\E\bigg[\bigg\|\frac{\eta_t(0,\emparg)}{\pi}-1\bigg\|_2^2\bigg]\le \frac{CN^d}{\tonde{N^d\wedge t^{d/2}}\vee 1}\exp\tonde{-2t/t_{\rm rel}+\frac{Bt}{N^{d+2}}} \comma
	\end{equation} 
	for some constants $B, C>0$ (depending only on $d\ge1$).
\end{corollary}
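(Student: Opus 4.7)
The plan is to integrate the identity \eqref{eq:infinitesimal-AL} from $t$ to $+\infty$ and then insert the two upper bounds provided by Theorem \ref{th:local-conv} and \eqref{eq:estimate-dir-RW}; what remains is a one-variable time integral that must be matched to the piecewise shape of the target.

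First I would justify the integrated form. Since the masses of $\eta_s(0,\emparg)$ lie in $[0,1]$, $\E\bigl[\|\eta_s(0,\emparg)/\pi-1\|_2^2\bigr]$ is uniformly bounded and, by \eqref{eq:conv-qualitative} together with dominated convergence, vanishes as $s\to\infty$. Integrating \eqref{eq:infinitesimal-AL} over $[t,\infty)$ therefore gives
\[
\E\bigl[\|\eta_t(0,\emparg)/\pi-1\|_2^2\bigr] \;=\; \int_t^\infty \E\bigl[\cE(\eta_s(0,\emparg)/\pi)\bigr]\,\dd s,
\]
while combining the upper bound in \eqref{eq:local-conv} with the right-hand inequality in \eqref{eq:estimate-dir-RW} yields, for $N$ large,
\[
\E\bigl[\cE(\eta_s(0,\emparg)/\pi)\bigr] \;\le\; C\,\frac{N^d\, e^{-\alpha_N s}}{(N^{d+2}\wedge s^{d/2+1})\vee 1}, \qquad \alpha_N \eqdef \frac{2}{t_{\rm rel}} - \frac{B}{N^{d+2}} > 0,
\]
with $\alpha_N = (2/t_{\rm rel})(1+o(1))$, so that the integral converges.

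I would then split the analysis on $t$ into the three regimes matching the three pieces of $(N^d \wedge t^{d/2}) \vee 1$: (i) $t\le 1$; (ii) $1 \le t \le N^2$; and (iii) $t\ge N^2$. In (i), bound the sub-intervals $[t,1]$, $[1,N^2]$, $[N^2,\infty)$ separately: the first by $1$, the middle by the convergent integral $\int_1^\infty s^{-(d/2+1)}\,\dd s$ (finite since $d/2+1>1$), and the tail by $e^{-\alpha_N N^2}/(\alpha_N N^{d+2})$; altogether this gives $O(1)$, which multiplied by $N^d$ matches the target $CN^d$. In (iii), only the tail contributes, giving $e^{-\alpha_N t}/(\alpha_N N^{d+2}) = O(e^{-\alpha_N t}/N^d)$ since $\alpha_N N^{d+2} = \Theta(N^d)$; multiplying by $N^d$ yields $O(e^{-\alpha_N t})$, which matches.

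The main obstacle is regime (ii), where one must establish
\[
\int_t^{N^2} s^{-(d/2+1)} e^{-\alpha_N s}\,\dd s \;\le\; C\,\frac{e^{-\alpha_N t}}{t^{d/2}}
\]
uniformly, despite the integrand being polynomial-dominated when $\alpha_N t\ll 1$ and exponential-dominated when $\alpha_N t\gtrsim 1$. I would separate the two sub-cases: if $\alpha_N t \le 1$, drop the exponential, use $\int_t^\infty s^{-(d/2+1)}\,\dd s = (2/d)\,t^{-d/2}$, and reinsert the factor $e^{-\alpha_N t}=\Theta(1)$; if $\alpha_N t \ge 1$, bound $s^{-(d/2+1)}\le t^{-(d/2+1)}$ on $[t,\infty)$ and integrate the exponential, obtaining $e^{-\alpha_N t}/(\alpha_N\, t^{d/2+1}) \le e^{-\alpha_N t}/t^{d/2}$ thanks to $\alpha_N t \ge 1$. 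The leftover tail on $[N^2,\infty)$ contributes $O(e^{-\alpha_N t}/N^d) \le O(e^{-\alpha_N t}/t^{d/2})$ since $t\le N^2$, closing the case. Everything else is routine bookkeeping.
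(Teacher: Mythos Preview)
Your proposal is correct and follows exactly the approach sketched in the paper: integrate the identity \eqref{eq:infinitesimal-AL} over $[t,\infty)$ and insert the upper bounds from Theorem~\ref{th:local-conv} and \eqref{eq:estimate-dir-RW}. The paper gives only this one-line description and leaves the time-integral estimate implicit; your case split on $t$ and the further sub-case split on $\alpha_N t \lessgtr 1$ in regime~(ii) supply precisely the missing bookkeeping, and all the bounds check out.
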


\subsection{Concentration and limit profile}
We now present our second main result, whose proof is based on the following identity (which we prove in Lemma \ref{lemma:concentration1})
\begin{equation}
	\E\bigg[\bigg\|\frac{\eta_t(0,\emparg)}{\pi}-\frac{\pi_t(0,\emparg)}{\pi}\bigg\|_2^2\bigg] = \frac1{dN^d}\int_0^t \E\bigg[\cE\bigg(\frac{\eta_{t-s}(0,\emparg)}{\pi}\bigg)\bigg]\cE\bigg(\frac{\pi_s(0,\emparg)}{\pi}\bigg)\, \dd s\comma\qquad t\ge 0\comma
\end{equation}
and the findings in Theorem \ref{th:local-conv}.
\begin{theorem}[Concentration]\label{th:concentration}
	Recall $\Xi(t)$ from \eqref{eq:estimate-dir-RW}.
	For all $d\ge 1$, for all $N\in \N$ large enough,	 and for all $t\ge 0$, we have
	\begin{equation}\label{eq:concentration}
		C_1 \left(1\wedge t\right)\Xi(t)\le \E\bigg[\bigg\|\frac{\eta_t(0,\emparg)}{\pi}-\frac{\pi_t(0,\emparg)}{\pi}\bigg\|_2^2\bigg]\le C_2\left(1\wedge t\right)\Xi(t)\,\exp\left(\frac{Bt}{N^{d+2}}\right)\comma
	\end{equation}
	for some constants $B, C_1,C_2> 0$ (depending only on $d\ge 1$).
\end{theorem}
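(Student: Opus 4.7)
The starting point is the renewal-type identity stated just before the theorem,
\begin{equation*}
	\E\bigg[\bigg\|\frac{\eta_t(0,\emparg)}{\pi}-\frac{\pi_t(0,\emparg)}{\pi}\bigg\|_2^2\bigg] = \frac1{dN^d}\int_0^t \E\bigg[\cE\bigg(\frac{\eta_{t-s}(0,\emparg)}{\pi}\bigg)\bigg]\cE\bigg(\frac{\pi_s(0,\emparg)}{\pi}\bigg)\, \dd s\comma
\end{equation*}
which reduces the problem to a time-convolution of two Dirichlet forms. I would combine it with the sandwich already at our disposal: the lower bound in \eqref{eq:dir-form-trivial-lb} together with the lower heat-flow estimate in \eqref{eq:estimate-dir-RW} yield $\E[\cE(\eta_{t-s}(0,\emparg)/\pi)]\ge C_1'\,\Xi(t-s)$, while the upper bound in \eqref{eq:local-conv} combined with the upper one in \eqref{eq:estimate-dir-RW} yields $\E[\cE(\eta_{t-s}(0,\emparg)/\pi)]\le C_2'\,\Xi(t-s)\,\exp(B(t-s)/N^{d+2})$. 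Plugging these into the integral and pulling the exponential out of the $s$-integration (since it is monotone in $t-s$ and bounded by $\exp(Bt/N^{d+2})$), both sides of \eqref{eq:concentration} reduce to controlling
\begin{equation*}
	I(t)\eqdef \int_0^t \Xi(t-s)\,\Xi(s)\,\dd s\comma
\end{equation*}
up to universal constants and the factor $\exp(Bt/N^{d+2})$ on the upper side. The claim then amounts to showing
\begin{equation*}
	I(t)\asymp N^d\,(1\wedge t)\,\Xi(t)\fstop
\end{equation*}

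To verify the latter I would split into three regimes according to the shape of $\Xi$. First, for $t\le 1$ one has $\Xi(s)\asymp N^d$ on the whole interval $[0,t]$, so $I(t)\asymp N^{2d}t$, which indeed matches $N^d\cdot t\cdot \Xi(t)$. Second, for $1\le t\le N^2$ one has $\Xi(s)\asymp N^d/(s^{d/2+1}\vee 1)$; the integrand is then integrable near both endpoints and is concentrated there, so that
\begin{equation*}
	I(t)\asymp 2\int_0^1 \Xi(s)\,\Xi(t-s)\,\dd s + \int_1^{t-1}\frac{N^{2d}}{s^{d/2+1}(t-s)^{d/2+1}}\,\dd s \asymp \frac{N^{2d}}{t^{d/2+1}}\asymp N^d\,\Xi(t)\comma
\end{equation*}
and the bound $(1\wedge t)=1$ is the correct prefactor. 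Third, for $t>N^2$ the spectral-gap exponential $\exp(-2s/t_{\rm rel})\exp(-2(t-s)/t_{\rm rel})=\exp(-2t/t_{\rm rel})$ factors out of the convolution and the analysis reduces to the previous one; the endpoint contributions of order $N^d\Xi(t)$ dominate, giving again $I(t)\asymp N^d\Xi(t)$. Putting the cases together yields \eqref{eq:concentration}, with $\exp(Bt/N^{d+2})$ appearing only on the upper bound side by construction.

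\textbf{Main obstacle.} The bookkeeping step in the proof is the convolution estimate $I(t)\asymp N^d(1\wedge t)\Xi(t)$, because the three different scalings in $\Xi$ (constant plateau for $s\le 1$, polynomial decay $s^{-(d/2+1)}$ for $1\le s\le N^2$, and exponential decay for $s\ge N^2$) force a case-by-case argument, and in the intermediate regime one must verify that the bulk of the integral sits in the $O(1)$-neighborhoods of the two endpoints; for $d=1$ this requires checking the mild integrability of $s^{-3/2}$ at the endpoints, which is the borderline case but still yields the endpoint-dominated scaling. Once this is established, matching upper and lower bounds with the announced prefactors $C_1$, $C_2$, and exponential correction follow directly from the sandwich on the Dirichlet form.
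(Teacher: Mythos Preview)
Your approach is correct and would yield the theorem, but it differs from the paper's argument in one notable way. After rewriting the identity as $\frac{1}{dN^d}\E[\|\cdot\|_2^2]=(u\ast g)(t)$ with $u,g$ as in \eqref{eq:g-u}, the paper does \emph{not} perform a case-by-case convolution estimate on $I(t)=\int_0^t\Xi(t-s)\Xi(s)\,\dd s$. Instead, it recycles the renewal machinery of Section~\ref{sec:proof-local-conv}: using $u\le\tilde u=\sum_{k\ge1}\tilde g^{\ast k}$ and $g\le\tilde g$, one has $u\ast g\le\sum_{k\ge2}\tilde g^{\ast k}\le\sum_{k\ge1}\tilde g^{\ast k}$, and this last series is precisely the object already bounded in Lemma~\ref{lemma:final-local-conv}. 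So the upper bound for $t\ge1$ comes for free from the proof of Theorem~\ref{th:local-conv}, and the small-time range $t\le1$ follows from $(u\ast g)(t)\le t$ since $g\le u\le1$. The lower bound is obtained exactly along your lines: $(u\ast g)(t)\ge(g\ast g)(t)\ge2g(t)\int_0^{t/2\wedge1}g(s)\,\dd s\ge c(t\wedge1)g(t)$ using monotonicity of $g$. What the paper's route buys is that the only nontrivial analysis (the convolution control) is done once, in Lemma~\ref{lemma:final-local-conv}, and not repeated here; your route is more self-contained but requires the independent estimate of $I(t)$.

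One small gap in your sketch: in the third regime you assert that, after factoring out $\exp(-2t/t_{\rm rel})$, ``the endpoint contributions of order $N^d\Xi(t)$ dominate, giving again $I(t)\asymp N^d\Xi(t)$''. This is not quite true for $t\gg N^{d+2}$. Once both $s$ and $t-s$ exceed $N^2$, the integrand (after removing the exponential) is $N^{-4}$ on an interval of length $\sim t$, so the bulk contributes $t/N^4$, while each endpoint contributes $\sim N^{d-2}$. Hence the bulk wins as soon as $t\gg N^{d+2}$, and in that range $I(t)\asymp(t/N^{d+2})\,N^d\Xi(t)$ rather than $N^d\Xi(t)$. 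This does not break the theorem: for the upper bound the extra polynomial factor $t/N^{d+2}$ is absorbed into $\exp(Bt/N^{d+2})$ (possibly after enlarging $B$), and for the lower bound the bulk contribution only helps. But you should state the upper bound in case~3 as $I(t)\le C\,(1\vee t/N^{d+2})\,N^d\Xi(t)$ and then absorb, rather than claim endpoint dominance.
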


The proof of the following consequence of Theorem \ref{th:concentration} may be found in Section \ref{sec:proofs-propositions}.

\begin{proposition}[Limit profile]\label{pr:limit-profile}
	Fix $d\ge1$, and let $h_t(0,u)$, $u \in \T^d$ and $t\ge 0$, be the heat kernel of the diffusion on $\T^d$ with generator $\frac12 \Delta$, started from the origin $0\in \T^d$.
	
	Then, for all bounded intervals $[a,b]\subset (0,+\infty)$, there exists $C=C(a,b,d)>0$ satisfying, for all $p\in [1,2]$ and for all $N\in \N$ large enough,
	\begin{equation}
		\sup_{t\in [a,b]}\abs{\E\bigg[\bigg\|\frac{\eta_{tN^2}(0,\emparg)}{\pi}-1\bigg\|_p^p\bigg]^\frac1p-\big\|h_t(0,\emparg)-1\big\|_{L^p(\T^d)}}\le \frac{C}{N}\fstop
	\end{equation}
\end{proposition}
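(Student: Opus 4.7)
The plan is to combine the early concentration estimate (Theorem \ref{th:concentration}) with a quantitative local CLT for ${\rm RW}(\T^d_N)$: the former lets us replace the averaging process by its mean $\pi_{tN^2}(0,\emparg)$, and the latter lets us replace the discrete heat kernel by its continuous counterpart $h_t(0,\emparg)$.

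First, applying the two-sided sandwich \eqref{eq:general-lb}--\eqref{eq:general-ub}, I would obtain
\begin{equation*}
\left|\E\bigg[\bigg\|\frac{\eta_{tN^2}(0,\emparg)}{\pi}-1\bigg\|_p^p\bigg]^{1/p}-\bigg\|\frac{\pi_{tN^2}(0,\emparg)}{\pi}-1\bigg\|_p\right|\le \E\bigg[\bigg\|\frac{\eta_{tN^2}(0,\emparg)}{\pi}-\frac{\pi_{tN^2}(0,\emparg)}{\pi}\bigg\|_2^2\bigg]^{1/2}.
\end{equation*}
For $t\in[a,b]$ and $N$ large, one has $1\wedge tN^2 =1$, and the minimum in $\Xi(tN^2)$ from \eqref{eq:estimate-dir-RW} is attained (up to constants depending on $a,b,d$) at $N^{d+2}$, since $(tN^2)^{d/2+1}=t^{d/2+1}N^{d+2}\ge a^{d/2+1}N^{d+2}$. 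Hence $\Xi(tN^2)=O(N^{-2})$, and the factor $\exp(BtN^2/N^{d+2})=\exp(Bt/N^{d})$ is $1+o(1)$. Theorem \ref{th:concentration} therefore bounds the right-hand side above by $O(1/N)$ uniformly over $t\in[a,b]$ and $p\in[1,2]$.

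Next, I would prove the deterministic estimate
\begin{equation*}
\sup_{t\in[a,b]}\left|\,\bigg\|\frac{\pi_{tN^2}(0,\emparg)}{\pi}-1\bigg\|_p-\|h_t(0,\emparg)-1\|_{L^p(\T^d)}\,\right|=O(1/N).
\end{equation*}
Writing the discrete $L^p$-norm as a Riemann sum,
\begin{equation*}
\bigg\|\frac{\pi_{tN^2}(0,\emparg)}{\pi}-1\bigg\|_p^p=\frac{1}{N^d}\sum_{x\in\T^d_N}\big|N^d\pi_{tN^2}(0,x)-1\big|^p,
\end{equation*}
I would first invoke a quantitative LCLT on $\T^d_N$, obtainable by comparing the Fourier diagonalization of $\pi_{tN^2}(0,\cdot)$ in the characters of $\T^d_N$ with the Fourier expansion of $h_t(0,\cdot)$ on $\T^d$: truncating both spectral sums at a frequency cutoff depending on $a$, expanding the eigenvalues $1-\cos(2\pi k/N)$ to second order, and using the exponential decay $\exp(-t|k|^2/2)$ of the high modes for $t\ge a>0$, one obtains
\begin{equation*}
N^d\pi_{tN^2}(0,x)=h_t(0,x/N)+O(1/N),\qquad\text{uniformly in }x\in\T^d_N\text{ and }t\in[a,b].
\end{equation*}
Substituting this into the Riemann sum, using the elementary bound $\big||A|^p-|B|^p\big|\le p(|A|+|B|)^{p-1}|A-B|$, and invoking the Lipschitz regularity of $u\mapsto|h_t(0,u)-1|^p$ uniformly in $t\in[a,b]$ and $p\in[1,2]$, the Riemann sum is within $O(1/N)$ of $\int_{\T^d}|h_t(0,u)-1|^p\,\dd u$. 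Taking $p$-th roots preserves the $O(1/N)$ error, since $\|h_t(0,\emparg)-1\|_{L^p(\T^d)}$ is bounded away from zero on $[a,b]$, making $x\mapsto x^{1/p}$ Lipschitz on the range of interest with a constant that is uniform in $p\in[1,2]$.

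The main technical obstacle I foresee lies in the quantitative LCLT with $O(1/N)$ control uniform jointly in $(x,t)\in\T^d_N\times[a,b]$. However, this is fairly standard thanks to the explicit spectral decomposition of $L^{\rm RW}$: the exponential smoothing available for $t\ge a>0$ allows truncation of both Fourier series at a fixed (in $N$) cutoff, so the whole comparison reduces to controlling finitely many low-frequency characters, where a Taylor expansion supplies the $1/N$ correction.
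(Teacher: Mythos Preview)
Your proposal is correct and follows essentially the same approach as the paper: both use the sandwich \eqref{eq:general-lb}--\eqref{eq:general-ub} together with Theorem~\ref{th:concentration} to replace $\eta_{tN^2}$ by $\pi_{tN^2}$ at cost $O(1/N)$, and then invoke a quantitative local CLT for ${\rm RW}(\T^d_N)$ plus the smoothness of $h_t$ to compare the discrete and continuous heat kernels. The only cosmetic difference is that the paper handles the deterministic comparison via the reverse triangle inequality for the $L^p$-norm (splitting as $\|\frac{\pi_{tN^2}(0,\emparg)}{\pi}-h_t(\emparg/N)\|_p+\|h_t(\emparg/N)-1\|_p-\|h_t-1\|_{L^p(\T^d)}$), thereby avoiding the passage through $p$-th powers and the need to bound $\|h_t-1\|_{L^p(\T^d)}$ away from zero; your route through $p$-th powers and Riemann sums is slightly longer but equally valid.
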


As already discussed in Section \ref{sec:summary-concentration},  specializing the upper bound in \eqref{eq:concentration} to times $t=\Theta(N^2)$ yields a quantitative hydrodynamic limit for ${\rm Avg}(\T^d_N)$. We report the precise statement below. Its simple proof is analogous to that of Proposition \ref{pr:limit-profile} and, thus, is left to the reader.
\begin{corollary}[Quantitative hydrodynamic limit]\label{cor:HDL}
	Fix $d\ge 1$ and, for all $g\in \cC(\T^d)$, let $(t,u)\in [0,\infty)\times \T^d\mapsto h_t^g(u)$ be the unique solution to the heat equation $\partial_t \rho=\frac12 \Delta \rho$ on $\T^d$ with initial condition $\rho_0=g$.

	Then, for all bounded intervals $[a,b]\subset (0,+\infty)$, there exists $C=C(a,b,d)>0$ satisfying,  for all $N\in \N$ large enough, for all $\xi \in \cP(\T^d_N)$, and  $\Psi\in \cC(\T^d_N)$, 
	\begin{equation}
		\sup_{t\in [a,b]}	\E\bigg[\bigg|\sum_{x\in \T^d_N}\eta^\xi_{tN^2}(x)\, \Psi(\tfrac{x}{N})-\int_{\T^d}h^g_t(u)\, \Psi(u)\, \dd u\bigg|\bigg]\le \|\Psi\|_\infty\bigg(\frac{C}{N}+\bigg\|\frac{\xi}{\pi}-\frac{g(\tfrac{\emparg}{N})}{\pi}\bigg\|_1\bigg)\fstop
	\end{equation}
\end{corollary}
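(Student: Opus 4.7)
The plan is to emulate the proof of Proposition~\ref{pr:limit-profile}, adding a triangle inequality to account for the mismatch between the discrete initial datum $\xi$ and the continuous $g$. I would start by decomposing
\begin{equation}
\sum_{x\in \T^d_N}\eta^\xi_{tN^2}(x)\,\Psi(\tfrac{x}{N})-\int_{\T^d}h^g_t(u)\,\Psi(u)\,\dd u = \mathrm{(I)}+\mathrm{(II)}\comma
\end{equation}
with $\mathrm{(I)}\eqdef \sum_x(\eta^\xi_{tN^2}-\pi^\xi_{tN^2})(x)\,\Psi(\tfrac{x}{N})$, a centered random variable by \eqref{eq:mean}, and $\mathrm{(II)}$ the deterministic remainder.

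For the random fluctuation I would use $\abs{\mathrm{(I)}}\le \|\Psi\|_\infty \tnorm{\tfrac{\eta^\xi_{tN^2}}{\pi}-\tfrac{\pi^\xi_{tN^2}}{\pi}}_1\le \|\Psi\|_\infty\tnorm{\tfrac{\eta^\xi_{tN^2}}{\pi}-\tfrac{\pi^\xi_{tN^2}}{\pi}}_2$ (Cauchy--Schwarz, since $\pi$ is a probability), take expectation, and pass to $(\E\|\emparg\|_2^2)^{1/2}$ by Jensen. The key reduction is that the averaging dynamics is linear in the initial datum, so $\eta^\xi_t = \sum_y \xi(y)\eta_t(y,\emparg)$; combining this with the convexity of $(\emparg)^2$ under the probability measure $\xi$ and translation invariance on $\T^d_N$ reduces the resulting mean-square to the case $\xi=\car_0$. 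Theorem~\ref{th:concentration} then yields $\E\abs{\mathrm{(I)}}\le C\|\Psi\|_\infty/N$ uniformly for $t\in[a,b]$, after substituting $\Xi(tN^2)=\Theta(N^{-2})$ and noting that $\exp(BtN^2/N^{d+2})=1+o(1)$ on this window.

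For the deterministic bias $\mathrm{(II)}$, I would introduce $F(u)\eqdef \int_{\T^d}h_t(u,v)\,\Psi(v)\,\dd v$, which by standard heat-kernel smoothing is $C^\infty$ on $\T^d$ with all derivatives bounded uniformly for $t\in[a,b]$ by a constant times $\|\Psi\|_\infty$. Self-adjointness of the continuous semigroup gives $\int h^g_t\,\Psi\,\dd u=\int gF\,\dd u$, while translation invariance on $\T^d_N$ combined with Fubini yields $\sum_x \pi^\xi_{tN^2}(x)\,\Psi(\tfrac{x}{N})=\sum_y \xi(y)\,F_N(y)$, where $F_N(y)\eqdef \Ex^{\rm RW}_y[\Psi(X_{tN^2}/N)]$. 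Splitting
\begin{equation}
\mathrm{(II)}=\sum_y \xi(y)\tonde{F_N(y)-F(\tfrac{y}{N})}+\sum_y\tonde{\xi(y)-\pi(y)\,g(\tfrac{y}{N})}F(\tfrac{y}{N})+\bigg(\tfrac{1}{N^d}\sum_y g(\tfrac{y}{N})F(\tfrac{y}{N})-\int_{\T^d} gF\,\dd u\bigg)\fstop
\end{equation}
The first summand would be handled by a uniform local CLT $\sup_y\abs{F_N(y)-F(y/N)}\le C\|\Psi\|_\infty/N$, obtained via Fourier diagonalization of $L^{\rm RW}$ together with the $C^\infty$-smoothness of $F$. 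The second summand is trivially bounded by $\|\Psi\|_\infty$ times the initial-condition norm appearing in the statement. The third summand is a Riemann-sum error of order $C\|\Psi\|_\infty/N$ by the smoothness of $gF$.

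The technical core is the uniform $O(1/N)$ local CLT bounding the first of the three pieces above. What makes this rate achievable is precisely the $C^\infty$-smoothness of $F$ (with bounds uniform in $t\in[a,b]$), which permits a direct Duhamel-type comparison of the discrete and continuous semigroups acting on $F$: their generators $L^{\rm RW}$ and $\tfrac{1}{2}\Delta$ differ by $O(1/N^2)$ on smooth functions, and integration of this infinitesimal discrepancy over $[0,t]$ yields the claimed rate. Combining the estimates on $\mathrm{(I)}$ and $\mathrm{(II)}$ and taking the supremum over $t\in[a,b]$ would conclude the proof.
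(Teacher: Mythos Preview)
Your approach is precisely the one the paper intends (the corollary is left to the reader with a pointer to Proposition~\ref{pr:limit-profile}), and your handling of the random fluctuation $\mathrm{(I)}$ via convexity reduction to $\xi=\car_0$ and Theorem~\ref{th:concentration} is correct. Two of your claims for the deterministic bias $\mathrm{(II)}$, however, do not hold at the stated generality. You assert that the Riemann-sum error for $gF$ is $O(\|\Psi\|_\infty/N)$ ``by the smoothness of $gF$'', but $g$ is only assumed continuous, so $gF$ is merely continuous and the midpoint-rule error is governed by the modulus of continuity of $g$, not by $1/N$ with a constant independent of $g$. Similarly, your Duhamel argument for $\sup_y|F_N(y)-F(y/N)|\le C\|\Psi\|_\infty/N$ does not close: the generator discrepancy $N^2L^{\rm RW}-\tfrac12\Delta=O(N^{-2})$ must be applied to $P_s\Psi$ for \emph{all} $s\in[0,t]$, and heat-kernel smoothing gives only $\|P_s\Psi\|_{C^4}\le Cs^{-2}\|\Psi\|_\infty$, which is non-integrable at $s=0$. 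Smoothness of the \emph{output} $F=P_t\Psi$ does not rescue this, since the interpolation sees $P_s\Psi$ at every intermediate $s$.

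These gaps in fact reflect an imprecision in the corollary itself: taking $d=1$, $\xi=\pi$, $g\equiv1$, and $\Psi(u)=\cos(2\pi Nu)$ one has $\sum_x\eta^\pi_{tN^2}(x)\Psi(x/N)=1$ (since $\eta^\pi\equiv\pi$ and $\cos(2\pi x)=1$ for integer $x$) while $\int h^1_t\,\Psi=0$, so the left-hand side equals $1$ whereas the stated right-hand side is $C/N$. Hence the bound with $C=C(a,b,d)$ and only $\|\Psi\|_\infty$ on the right cannot hold uniformly over continuous $\Psi$ (and an analogous obstruction arises from continuous $g$). Under mild additional regularity---say $g,\Psi$ Lipschitz, or fixed independently of $N$ with $C$ allowed to depend on them---your decomposition goes through; for the first piece one should then invoke the pointwise local CLT $\sup_x|N^d\pi_{tN^2}(0,x)-h_t(0,x/N)|=O(N^{-2})$ directly (as in the proof of Proposition~\ref{pr:limit-profile}) rather than Duhamel.
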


\subsection{Structure of the paper} The rest of the paper is organized as follows. In Section \ref{sec:preliminaries}, we discuss some preliminary facts which allow us to conveniently reformulate our problem. In Sections \ref{sec:proof-local-conv} and \ref{sec:proof-concentration} we present the proofs of our main results, Theorems \ref{th:local-conv} and \ref{th:concentration}, respectively. The proofs of Propositions \ref{pr:grad-estimates} and \ref{pr:limit-profile} are the content of Section \ref{sec:proofs-propositions}.  As a further application of our main results, in Section \ref{sec:mixing-bin}, we prove cutoff (in total variation distance) for an interacting particle system dual to the averaging process, known as \emph{binomial splitting process}, strengthening a result in \cite{quattropani2021mixing}.

\section{Preliminaries}\label{sec:preliminaries}
In this section, we introduce  two auxiliary Markov processes, whose properties will play a key role in Sections \ref{sec:proof-local-conv} and \ref{sec:proof-concentration} below,  dedicated to the proofs of our main results.
\subsection{Coupled random walks}\label{sec:crw}

As discussed in \eqref{eq:mean}, first-order moments of the averaging process may be expressed in terms of the random walk $X_t$. This property is a particular instance of \emph{duality} (see, e.g., \cite{liggett_interacting_2005-1}). This connection was already noted in \cite{aldous_lecture_2012}, further exploited and generalized in  \cite{quattropani2021mixing,caputo_quattropani_sau_cutoff_2023}, and  holds more generally for all $k^{th}$-order moments. In particular, 	in order to prove our main results, we will need to introduce a dual system of $k=2$ interacting walks, which we now describe in detail.

Place two particles on the vertices of $\T^d_N$, and endow each unordered pair of nearest neighbors of $\T^d_N$ with exponential clocks of unit rate, independent over the pairs.  Assume that the clock associated to the pair $\{x,y\}$, $|x-y|=1$, rings. Particles sitting at that time on either $x$ or $y$ decide, independently from each other and with probability $1/2$, to change vertex, i.e., moving from $x$ to $y$ if originally at $x$, or vice versa.   Let $((X_t,Y_t))_{t\ge 0}\subset \T^d_N\times \T^d_N$ denote the  Markov process of the positions of these two particles. Both marginals $X_t$ and $Y_t$ evolve as two copies of ${\rm RW}(\T^d_N)$. Moreover, they move independently as long as they are at graph distance larger than one. However, when sitting at distance smaller than two, the two walks may interact by experiencing synchronous jumps. 	 For this reason, we refer to $(X_t,Y_t)$ as a system of two \emph{coupled random walks}, shortly ${\rm CRW}(\T^d_N)$. We let $\Pr_\nu^{\rm CRW}$ denote its law when $(X_0,Y_0)\sim \nu$.

The importance of ${\rm CRW}(\T^d_N)$ lies in the following second-order duality relation, analogous to that in \eqref{eq:mean}:
\begin{equation}\label{eq:dual-CRW}
	\E\big[\eta_t^\xi(x)\eta_t^\xi(y)\big] = \Pr_{\xi\otimes \xi}^{\rm CRW}(X_t=x\,,\,Y_t=y)\comma\qquad t\ge 0\comma x, y \in \T^d_N\comma \xi \in \cP(\T^d_N)\fstop
\end{equation}
In analogy with the notation used for ${\rm RW}(\T^d_N)$, when $\xi\otimes \xi'=\car_x\otimes \car_y$ for some $x, y	\in \T^d_N$, we simply write $\Pr_{x,y}^{\rm CRW}=\Pr_{\xi\otimes \xi'}^{\rm CRW}$.	
The introduction of ${\rm CRW}(\T^d_N)$ and the duality relation with ${\rm Avg}(\T^d_N)$ allow us to efficiently rewrite the mean Dirichlet form of ${\rm Avg}(\T^d_N)$.
\begin{proposition}\label{pr:dir-form-CRW}
	For all $d\ge 1$, $N\in \N$,  $t\ge 0$ and $e \in \T^d_N$ with $|e|=1$, we have
	\begin{align}\label{eq:dir-form-CRW}
		\frac1{dN^d}\,\E\bigg[\cE\bigg(\frac{\eta_t(0,\emparg)}{\pi}\bigg)\bigg]= \Pr_{0,0}^{\rm CRW}(X_t-Y_t=0)-\Pr_{0,0}^{\rm CRW}(X_t-Y_t=e) \fstop
	\end{align}
\end{proposition}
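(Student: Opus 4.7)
The identity is algebraic, combining (i) the explicit form of $\cE$ with translation and axis-swap symmetries of $\T^d_N$, and (ii) the second-order duality \eqref{eq:dual-CRW} with exchangeability of the two ${\rm CRW}$ particles started at the diagonal. The strategy is to show that both sides equal $\tfrac12\sum_{x\in\T^d_N}\E[(\eta_t(0,x)-\eta_t(0,x+e))^2]$ for any fixed unit vector $e$, and then conclude by matching.

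Starting from the left-hand side, since $\pi\equiv N^{-d}$, one has $\cE(\eta_t(0,\emparg)/\pi)=\frac{N^d}{4}\sum_{x}\sum_{y:|x-y|=1}(\eta_t(0,x)-\eta_t(0,y))^2$. After taking expectation and summing over $x$, axis-swap symmetry renders the contributions of the $d$ coordinate directions equal, while the reflection $e\leftrightarrow -e$ is absorbed by the reindexing $x\mapsto x-e$; these together collapse the $2d$ neighbors of each vertex to a single direction $e$, giving $\E[\cE(\eta_t(0,\emparg)/\pi)] = \tfrac{d\,N^d}{2}\sum_x \E[(\eta_t(0,x)-\eta_t(0,x+e))^2]$, which matches the target after division by $dN^d$. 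Turning to the right-hand side, I would expand the square; the same shift $x\mapsto x-e$ gives $\sum_x\E[\eta_t(0,x+e)^2]=\sum_x\E[\eta_t(0,x)^2]$, so the target also equals $\sum_x\E[\eta_t(0,x)^2]-\sum_x\E[\eta_t(0,x)\eta_t(0,x+e)]$. Applying the duality \eqref{eq:dual-CRW}, the first sum equals $\Pr_{0,0}^{\rm CRW}(X_t=Y_t)=\Pr_{0,0}^{\rm CRW}(X_t-Y_t=0)$ and the second equals $\Pr_{0,0}^{\rm CRW}(Y_t-X_t=e)$; the exchangeability $(X,Y)\leftrightarrow(Y,X)$ afforded by the symmetric initial condition $(0,0)$ identifies the latter with $\Pr_{0,0}^{\rm CRW}(X_t-Y_t=e)$, closing the identity.

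The only delicate point to keep in mind is that the initial condition $\car_0$ breaks translation symmetry pointwise, so $\E[(\eta_t(0,x)-\eta_t(0,x+e))^2]$ genuinely depends on $x$: translation, axis-swap, and reflection invariance must therefore be invoked at the level of sums over $x$ rather than term-by-term. This is precisely why one works with the collapsed quantity $\sum_x\E[(\eta_t(0,x)-\eta_t(0,x+e))^2]$, and also why the duality \eqref{eq:dual-CRW} enters only after summation (both the LHS counting and the RHS expression are naturally sums, which makes the identity a clean equality of two re-summations of the same object).
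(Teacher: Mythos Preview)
Your proposal is correct and follows essentially the same route as the paper's proof: expand the Dirichlet form, apply the second-order duality \eqref{eq:dual-CRW}, and use translation/permutation/reflection invariance together with the exchangeability $(X_t,Y_t)\overset{\rm Law}=(Y_t,X_t)$ under $\Pr_{0,0}^{\rm CRW}$ to collapse the sums. The only cosmetic difference is the order of operations---you first reduce to a single direction $e$ by symmetry and then expand the square and apply duality, whereas the paper expands and applies duality first and invokes the symmetries at the end---but the ingredients and the logic are identical.
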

\begin{proof}
	By definition of $\cE(\emparg)$ and the duality relation \eqref{eq:dual-CRW}, we obtain
	\begin{align}
		&		\E\bigg[\cE\bigg(\frac{\eta_t(0,\emparg)}{\pi}\bigg)\bigg]
		= \frac{N^d}{4}\sum_{\substack{x,y\in \T^d_N\\
				|x-y|=1}}\E\big[\tonde{\eta_t(0,x)-\eta_t(0,y)}^2\big]\\
		&\qquad = \frac{N^d}{2}\sum_{x\in \T^d_N}\sum_{\substack{y\in \T^d_N\\
				|x-y|=1}}\set{ \E\big[\eta_t(0,x)^2\big]-\E\big[\eta_t(0,x)\eta_t(0,y)\big]}\\
		&\qquad= \frac{N^d}{2}\sum_{x\in \T^d_N}\sum_{\substack{y\in \T^d_N\\
				|x-y|=1}} \set{\Pr_{0,0}^{\rm CRW}(X_t=x\,,\,Y_t=x)- \Pr_{0,0}^{\rm CRW}(X_t=x\,,\,Y_t=y)}\fstop
	\end{align}
	We get the desired conclusion by rearranging the terms in the	 last summation as follows:
	\begin{align}
		\sum_{x\in \T^d_N}\sum_{\substack{y\in \T^d_N\\
				|x-y|=1}} \Pr_{0,0}^{\rm CRW}(X_t=x\,,\,Y_t=y)= 2d\, \Pr_{0,0}^{\rm CRW}(X_t=Y_t)\comma
	\end{align}
	\begin{align}
		\sum_{x\in \T^d_N}
		\sum_{\substack{y\in \T^d_N\\
				|x-y|=1}}\Pr_{0,0}^{\rm CRW}(X_t=x\,,\,Y_t=y)&= \sum_{\substack{e'\in \T^d_N\\
				|e'|=1}} \Pr_{0,0}^{\rm CRW}(Y_t=X_t+e')\\
		&= 2d\, \Pr_{0,0}^{\rm CRW}(X_t-Y_t=e)\comma
	\end{align}
	where for the last identity  we used the translation and permutation invariance of the marginals laws of $(X_t,Y_t)$, as well as  $(X_t,Y_t)\overset{\rm Law}= (Y_t,X_t)$ under $\Pr_{0,0}^{\rm CRW}$.
\end{proof}

\subsection{Difference process}\label{sec:difference-process}
Next to ${\rm CRW}(\T^d_N)$,  motivated by the result in Proposition \ref{pr:dir-form-CRW}, we consider a second auxiliary process.
Given  $\seq{(X_t,Y_t)}_{t\ge 0}$ evolving as  ${\rm CRW}(\T^d_N)$, define $(Z_t)_{t\ge 0}$ as	
\begin{equation}
	Z_t\eqdef X_t-Y_t\comma\qquad t \ge 0\fstop
\end{equation}
We refer to $Z_t$ as 	the \emph{difference process} associated to $(X_t,Y_t)$. In our context, $Z_t$ turns out to be a Markov process. Indeed, if we were considering the difference process $Z_t^0\eqdef X_t^0-Y_t^0$ associated to $(X_t^0,Y_t^0)$, two \emph{independent} copies of ${\rm RW}(\T^d_N)$, then it is well-known that $Z_t^0$ is the simple random walk on $\T^d_N$ with infinitesimal generator $A_0=2L^{\rm RW}$, i.e.,  $Z_t^0$ jumps like ${\rm RW}(\T^d_N)$, but at a double rate.  As for $Z_t$, due to the interaction between $X_t$ and $Y_t$ when $|X_t-Y_t|\le 1$,  $Z_t$ moves like $Z_t^0$ with a defect represented by 	slow bonds attached to the origin. More in detail, the infinitesimal generator $A$ of $Z_t$ reads as
\begin{align}
	A=A_0+R\comma
\end{align}
where we recall that $A_0=2L^{\rm RW}$ is the generator of $Z_t^0$, while, for all $\psi:\T^d_N\to \R$, 
\begin{align}
	R \psi(z)\eqdef \begin{dcases}
		-\tfrac12 A_0\psi(0) &\text{if}\ z=0\\
		-\tfrac12\tonde{\psi(0)-\psi(z)}+\tfrac14\tonde{\psi(-z)-\psi(z)} &\text{if}\ |z|=1\\
		0 &\text{else}\fstop  
	\end{dcases}
\end{align} 
Moreover, for all $t\ge 0$, let $S_t=e^{tA}$ (resp.\ $S_t^0=e^{tA_0}$) denote the transition kernels of the random walk $Z_t$ (resp.\ $Z_t^0$).	 Observe that, since both $A_0$ and $R$ are symmetric kernels,  $A$  and $S_t$ are symmetric, too. Symmetry of $S_t$ and  Proposition \ref{pr:dir-form-CRW} imply the following result.
\begin{proposition}
	For all $t\ge 0$, we have
	\begin{align}\label{eq:dir-form-S}
		\frac1{dN^d}\,\E\bigg[\cE\bigg(\frac{\eta_t(0,\emparg)}{\pi}\bigg)\bigg] =  S_t(0,0)-S_t(e,0)\fstop
	\end{align}
\end{proposition}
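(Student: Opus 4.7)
The plan is to read off the statement directly from Proposition \ref{pr:dir-form-CRW} via a one-line identification of the probabilities on its right-hand side with entries of the kernel $S_t$, followed by the symmetry of $S_t$.

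First I would invoke Proposition \ref{pr:dir-form-CRW} to rewrite the left-hand side as $\Pr_{0,0}^{\rm CRW}(Z_t = 0) - \Pr_{0,0}^{\rm CRW}(Z_t = e)$, where $Z_t = X_t - Y_t$ is the difference process introduced in Section \ref{sec:difference-process}. Since under $\Pr_{0,0}^{\rm CRW}$ one has $Z_0 = 0 - 0 = 0$, and since $Z_t$ is a time-homogeneous Markov chain on $\T^d_N$ with transition semigroup $S_t = e^{tA}$, the one-dimensional marginals satisfy $\Pr_{0,0}^{\rm CRW}(Z_t = z) = S_t(0,z)$ for every $z \in \T^d_N$. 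In particular $\Pr_{0,0}^{\rm CRW}(Z_t = 0) = S_t(0,0)$, which takes care of the first term.

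For the second term, I would use that $A_0 = 2L^{\rm RW}$ and $R$ are both symmetric kernels on $\T^d_N$ — as observed in the paragraph preceding the statement — so that $A = A_0 + R$ is symmetric, and consequently $S_t = e^{tA}$ is symmetric for every $t \ge 0$. This gives $S_t(0,e) = S_t(e,0)$, hence $\Pr_{0,0}^{\rm CRW}(Z_t = e) = S_t(e,0)$, and combining the two identities with Proposition \ref{pr:dir-form-CRW} yields \eqref{eq:dir-form-S}.

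There is essentially no obstacle here: all the real work — Proposition \ref{pr:dir-form-CRW} itself, the Markov property of $Z_t$ with generator $A = A_0 + R$, and the symmetry of $A_0$ and $R$ — has already been carried out upstream. The only point where one might slip is in taking for granted the Markov property of $Z_t$ despite the coupling between $X_t$ and $Y_t$; this is precisely why Section \ref{sec:difference-process} wrote down $R$ explicitly by cases ($z=0$, $|z|=1$, or $|z|\ge 2$), and the remaining manipulation is then a line of algebra exploiting the symmetry of $S_t$.
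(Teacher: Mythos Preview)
Your proposal is correct and follows exactly the paper's route: the paper simply states that ``Symmetry of $S_t$ and Proposition \ref{pr:dir-form-CRW} imply the following result,'' which is precisely the identification $\Pr_{0,0}^{\rm CRW}(Z_t=\cdot)=S_t(0,\cdot)$ together with $S_t(0,e)=S_t(e,0)$ that you spell out.
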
 	
We conclude this section by observing that similar computations to those employed to derive \eqref{eq:dir-form-CRW} readily yield the following analogue of \eqref{eq:dir-form-S}: for all $t\ge 0$,
\begin{align}\label{eq:dir-form-S0}
	\frac1{dN^d}\, \cE\bigg(\frac{\pi_t(0,\emparg)}{\pi}\bigg) = S_t^0(0,0)-S_t^0(e,0)\fstop
\end{align}
Hence, by \eqref{eq:dir-form-S}, \eqref{eq:dir-form-S0}, and Jensen inequality, we obtain, for all $t\ge 0$,
\begin{equation}\label{eq:ineq-S-S0}
	\begin{aligned}
		S_t(0,0)-S_t(e,0)&=\frac1{dN^d}\,\E\bigg[\cE\bigg(\frac{\eta_t(0,\emparg)}{\pi}\bigg)\bigg]\\
		&\ge \frac1{dN^d}\, \cE\bigg(\frac{\pi_t(0,\emparg)}{\pi}\bigg) = S_t^0(0,0)-S_t^0(e,0)\fstop
	\end{aligned}
\end{equation}

\section{Proof of Theorem \ref{th:local-conv}}\label{sec:proof-local-conv}
This section is devoted to the proof of Theorem \ref{th:local-conv}, which we split into three parts. First, we exploit the representations in \eqref{eq:dir-form-S} and \eqref{eq:dir-form-S0} to express the mean Dirichlet form of ${\rm Avg}(\T^d_N)$ as an infinite series (Section \ref{sec:proof-local-conv-renewal}). Then, for this infinite series, we provide lower and upper bounds in Sections \ref{sec:proof-local-conv-lb} and \ref{sec:proof-local-conv-ub}, respectively. We gather all these facts together in Section \ref{sec:proof-local-conv-conclusion}.

\subsection{A renewal-type equation}\label{sec:proof-local-conv-renewal}
Our main goal in this section is to find a closed-form expression for the remainder in the inequality \eqref{eq:ineq-S-S0}. For this purpose, 
we employ the identities in \eqref{eq:dir-form-S} and \eqref{eq:dir-form-S0} involving  Dirichlet forms and the transition kernels $S_t$ and $S_t^0$, respectively, both introduced in Section \ref{sec:difference-process}. 
Let us further recall the definition of the associated generators $A=A_0+R$ and $A_0$ from Section \ref{sec:difference-process}.  

We start by applying the  integration by parts formula: for all $x\in \T^d_N$ and $t\ge0$, 
\begin{align}
	S_t(x,0)-S_t^0(x,0)&= -\int_0^t \sum_{y\in \T^d_N} S_{t-s}^0(x,y)\tonde{A_0-A}S_s\car_0(y)\, \dd s\\
	&= \int_0^t S_{t-s}^0(x,0)\, RS_s\car_0(0)\, \dd s + \int_0^t \sum_{\substack{y\in \T^d_N\\
			|y|=1}} S_{t-s}^0(x,y)\, RS_s\car_0(y)\, \dd s\\
	&=-\frac12\int_0^t S_{t-s}^0(x,0)\sum_{\substack{y\in \T^d_N\\
			|y|=1}}\tonde{S_s(y,0)-S_s(0,0)}\dd s\\
	&\qquad -\frac12 \int_0^t \sum_{\substack{y\in \T^d_N\\
			|y|=1}} S_{t-s}^0(x,y)\tonde{S_s(0,0)-S_s(y,0)}\dd s\\
	&= \frac12 \int_0^t \sum_{\substack{y\in \T^d_N\\|y|=1}} \tonde{S_s(0,0)-S_s(y,0)}\tonde{S_{t-s}^0(x,0)-S_{t-s}^0(x,y)}\dd s\fstop
\end{align}
Noting that $S_s(e,0)=S_s(e',0)$ for all $e, e'\in \T^d_N$ with $|e|=|e'|=1$, we further obtain
\begin{equation}
	S_t(x,0)-S_t^0(x,0) = \int_0^t \tonde{S_s(0,0)-S_s(e,0)}\sum_{\substack{y\in \T^d_N\\ |y|=1}}\frac12\tonde{S_{t-s}^0(x,0)-S_{t-s}^0(x,y)}\dd s\fstop
\end{equation}
By evaluating the above expression at $x=0$ and $x=e\in \T^d_N$, and  subtracting, we get
\begin{align}\label{eq:ibp-final}
	&	S_t(0,0)-S_t(e,0)= S_t^0(0,0)-S_t^0(e,0)\\
	&+\int_0^t \tonde{S_s(0,0)-S_s(e,0)} \sum_{\substack{y\in \T^d_N\\|y|=1}}\frac12\tonde{S_{t-s}^0(0,0)-S_{t-s}^0(0,y)-S_{t-s}^0(e,0)+S_{t-s}^0(e,y)}\dd s	\fstop
\end{align}
By exploiting the symmetry and translation invariance of the random walk $Z_t^0$, we may simplify the summation above as follows: for $e,e'\in \T^d_N$ with $|e|=|e'|=1$ and $e'\neq \pm e$,
\begin{align}
	&	\sum_{\substack{y\in \T^d_N\\|y|=1}}\frac12\tonde{S_t^0(0,0)-S_t^0(0,y)-S_t^0(e,0)+S_t^0(e,y)}
	\\ 
	&\qquad= \tonde{d+1/2} S_t^0(0,0)-2d\, S_t^0(e,0)+\tonde{d-1} S_t^0(e+e',0)+1/2\, S_t^0(2e,0)\fstop
\end{align}
Hence, adopting the shorthand notation
\begin{align}
	\label{eq:f}
	f(t)&\eqdef
	\tonde{d+1/2} S_t^0(0,0)-2d\, S_t^0(e,0)+\tonde{d-1} S_t^0(e+e',0)+1/2\, S_t^0(2e,0)\comma
\end{align}
\begin{align}\label{eq:g-u}
	g(t)\eqdef S_t^0(0,0)-S_t^0(e,0)\comma\qquad u(t)\eqdef S_t(0,0)-S_t(e,0)\comma
\end{align}
the identity in \eqref{eq:ibp-final} reads as the following renewal equation
\begin{equation}\label{eq:renewal-equation}
	u(t)= g(t)+\int_0^t u(s)\, f(t-s)\, \dd s\comma\qquad t \ge 0\comma
\end{equation}
for which, by iterating this integral relation, a solution may be expressed in terms of an infinite series expansion involving only 	the functions $f$ and $g$:
\begin{equation}\label{eq:infinite-series-repr-u}
	u=g +\sum_{k=1}^\infty (g\ast f^{\ast k})=\sum_{k=0}^\infty \tttonde{g\ast f^{\ast k}}\comma\qquad \text{with}\ f^{\ast 0}\eqdef \delta_0\comma f^{\ast k}\eqdef f \ast f^{\ast (k-1)}\fstop
\end{equation}
Here, the symbol $\ast$ denotes the usual convolution for functions defined on $\R$, although we will always apply it to functions which vanish on $(-\infty,0)$. Let us further remark that, with our notation,  $f^{\ast 1}=f$.
Moreover, note that both $f$ and $g$ are bounded and continuous. Therefore, in order to ensure that $u=\sum_k \tonde{g\ast f^{\ast k}}$ is the unique solution to \eqref{eq:renewal-equation}, it suffices to show that the functions $f$ and $g$ (and, thus, their iterated convolutions) are non-negative. We prove this property in Lemma \ref{lemma:f-g-positive} below. In the same lemma, we gather other simple properties of these functions to be employed later.

Instrumentally to the proof of this lemma, we remark that the functions $f$ and $g$ may be further simplified, since they only depend on transition probabilities of the simple random walk $Z_t^0$ (without defects) on $\T^d_N$ and $Z_t^0$ moves independently among each of the $d$ components. Indeed, letting $p_t(i)=p_t(i,0)$, for $i\in \T_N$ and $t\ge 0$, denote the transition probabilities to the origin of the one-dimensional simple random walk on $\T_N$ (i.e., $Z_t^0$ for $d=1$), we obtain, for all $d\ge 1$, $x=(i_1,\ldots, i_d)\in \T^d_N$ and $t\ge 0$, 
\begin{equation}\label{eq:tensorization}
	S_t^0(x,0)= \prod_{\ell=1}^d p_t(i_\ell)\fstop
\end{equation}
Henceforth, we have, for all $d\ge 1$,
\begin{equation}\label{eq:g-decomposition}
	g(t)=p_t(0)^{d-1}\tonde{p_t(0)-p_t(1)}\comma
\end{equation}
while, for $d=1$,
\begin{equation}\label{eq:f-decomposition-1D}
	f(t)=3/2\, p_t(0)-2\, p_t(1)+1/2\, p_t(2)\comma
\end{equation}
and, for $d\ge 2$,
\begin{equation}
	f(t)
	= p_t(0)^{d-2}\quadre{d\tonde{p_t(0)-p_t(1)}^2+1/2\tonde{p_t(0)^2-2\,p_t(1)^2+ p_t(0)p_t(2)}}	\fstop
	\label{eq:f-decomposition-2D}
\end{equation}
In what follows, we will repeatedly employ this convenient rewriting in combination with the explicit eigendecomposition (or, equivalently, the Laplace inversion formula) of $p_t(i)$  (see, e.g., \cite[Section 12.3.1]{levin2017markov}): for all $i\in \T_N$ and $t\ge 0$,
\begin{equation}\label{eq:eigendecomposition-1D}
	p_t(i)=\frac1N\sum_{j=0}^{N-1} \cos\tonde{2\pi ij/N}\, \exp\tonde{-2\lambda_j^N t}\comma\quad \text{with}\ \lambda_j^N\eqdef 1-\cos\tonde{2\pi j/N}\ge 0\fstop
\end{equation}
\begin{lemma}\label{lemma:f-g-positive}For every $d\ge 1$,  we have:
	\begin{enumerate}[(a)]
		\item \label{it:lemma-f-g-1} For all $t>0$, $f(t)$ and $g(t)$ are positive and	 uniformly (with respect to $N\in \N$) bounded away from zero.
		\item \label{it:lemma-f-g-2} For all $N\in \N$ large enough, $f$ and $g$ are decreasing.
		\item \label{it:lemma-f-g-4}
		For all $N\in \N$ large enough, $f\le \tonde{d+1/2}g$.
	\end{enumerate}
\end{lemma}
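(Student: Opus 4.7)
The plan is to reduce all three parts of the lemma to elementary manipulations of the one-dimensional transition probabilities $p_t(0)$, $p_t(1)$, $p_t(2)$, via the tensorization \eqref{eq:tensorization} and the explicit eigenexpansion \eqref{eq:eigendecomposition-1D}.

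For (a), I would work from the rewritings \eqref{eq:g-decomposition}--\eqref{eq:f-decomposition-2D}: the quantities $g(t) = p_t(0)^{d-1}(p_t(0)-p_t(1))$ and, for $d = 1$, $f(t) = \tfrac{1}{N}\sum_j (\lambda_j^N)^2 e^{-2\lambda_j^N t}$ are manifestly positive, since $p_t(0) - p_t(1) = \tfrac{1}{N}\sum_j \lambda_j^N e^{-2\lambda_j^N t} > 0$ for every $t>0$. The only non-trivial ingredient for $d \ge 2$ is the term $p_t(0)^2 - 2 p_t(1)^2 + p_t(0) p_t(2)$ appearing inside the bracket of \eqref{eq:f-decomposition-2D}, which, upon substituting the eigenexpansion and symmetrizing over the two dummy indices, becomes
\[
p_t(0)^2 - 2p_t(1)^2 + p_t(0) p_t(2) = \frac{1}{N^2}\sum_{j,k=0}^{N-1}(\cos\theta_j - \cos\theta_k)^2 e^{-2(\lambda_j^N + \lambda_k^N)t} \geq 0.
\]
Uniform positivity in $N$ then follows by comparing each such Riemann sum to its strictly positive $N \to \infty$ integral limit. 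For (b), the same rewritings exhibit both $g$ and $f$ as products of positive factors, each being either a power of $p_t(0)$ (strictly decreasing by direct differentiation of \eqref{eq:eigendecomposition-1D}) or a positive combination of decaying exponentials $e^{-2\mu t}$ with $\mu \geq 0$; products of positive non-increasing functions are non-increasing, and strictness follows as soon as $N \ge 3$.

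For (c), the main algebraic step is the regrouping
\[
\bigl(d + \tfrac12\bigr) g(t) - f(t) = (d-1)\bigl[S_t^0(e, 0) - S_t^0(e + e', 0)\bigr] + \tfrac12 \bigl[S_t^0(e, 0) - S_t^0(2e, 0)\bigr],
\]
which by \eqref{eq:tensorization} equals $(d-1)\, p_t(0)^{d-2}\, p_t(1)(p_t(0) - p_t(1)) + \tfrac12\, p_t(0)^{d-1}(p_t(1) - p_t(2))$. The first summand is non-negative by (a), so the full claim reduces to the one-dimensional monotonicity $p_t(1) \ge p_t(2)$ for $N$ large and all $t \ge 0$. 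I expect this to be the main obstacle, since it is not apparent from the eigenexpansion (where the summand changes sign).

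To handle it, my plan is to work directly from the Fourier formula
\[
p_t(1) - p_t(2) = \frac{1}{N}\sum_{j=0}^{N-1}(2\cos\theta_j + 1)\,\lambda_j^N\, e^{-2\lambda_j^N t},
\]
splitting the indices at the sign change $\cos\theta_j = -\tfrac12$ (equivalently $\lambda_j^N = \tfrac32$) into ``low'' modes $L$, where the signed weights $w_j \eqdef (2\cos\theta_j + 1)\lambda_j^N$ are non-negative, and ``high'' modes $H$, where they are non-positive. A direct Fourier identity using $\sum_j \cos\theta_j = 0$ and $\sum_j \cos^2\theta_j = N/2$ (both valid for $N \ge 3$) yields $\sum_j w_j = 0$, hence $S \eqdef \sum_L w_j = -\sum_H w_j > 0$. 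Introducing the probability averages $\overline{\E}_L, \overline{\E}_H$ over $L$ and $H$ weighted by $w_j$ and $|w_j|$ respectively, one obtains $p_t(1) - p_t(2) = \tfrac{S}{N}\bigl(\overline{\E}_L[e^{-2\lambda^N t}] - \overline{\E}_H[e^{-2\lambda^N t}]\bigr)$; since $\lambda^N < \tfrac32$ on $L$ (so $e^{-2\lambda^N t} \ge e^{-3t}$) while $\lambda^N > \tfrac32$ on $H$ (so $e^{-2\lambda^N t} \le e^{-3t}$), the two averages compare to $e^{-3t}$ with opposite inequalities, giving $p_t(1) \ge p_t(2)$ and thereby completing (c).
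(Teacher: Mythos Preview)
Your proposal is correct. Parts (a) and (b) proceed essentially as in the paper: both rely on the product forms \eqref{eq:g-decomposition}--\eqref{eq:f-decomposition-2D} together with the trigonometric identities $3/2-2\cos a+1/2\cos 2a=(1-\cos a)^2$ and $1-2\cos a\cos b+\tfrac12\cos 2a+\tfrac12\cos 2b=(\cos a-\cos b)^2$, which you state in equivalent Fourier-sum form.

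For part (c) your route is genuinely different from the paper's, and in fact simpler. Both reduce to the one-dimensional inequality $p_t(1)\ge p_t(2)$. The paper argues indirectly via the derivative $q_t'=(p_t(1)-p_t(2))'$: it shows $q_0=0$, $q_0'=1$, $q_t\to 0$, and then proves that $q_t'$ changes sign exactly once by decomposing the eigenexpansion of $q_t'$ into a non-positive part (modes $j\le N/3$) and a non-negative part (modes $N/3<j<2N/3$) and exploiting the monotonicity of the eigenvalues $\lambda_j^N$ on $\{1,\ldots,\lfloor N/2\rfloor\}$. Your argument is purely static: you work directly with $q_t$, observe that the Fourier weights $w_j=(2\cos\theta_j+1)\lambda_j^N$ sum to zero for $N\ge 3$, split at the sign change $\lambda_j^N=3/2$, and use that $e^{-2\lambda_j^N t}\ge e^{-3t}$ on the low modes while $e^{-2\lambda_j^N t}\le e^{-3t}$ on the high modes. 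This avoids the derivative analysis entirely and gives the inequality in one line. The paper's approach yields a little extra information (unimodality of $t\mapsto p_t(1)-p_t(2)$), which is not used anywhere; your approach is cleaner for the purpose at hand. One minor caveat: your averaging notation $\overline{\E}_H$ is only well-defined when the high-mode set $H$ is non-empty, i.e., for $N\ge 4$; for $N=3$ one has $p_t(1)=p_t(2)$ trivially by symmetry, so the restriction to $N$ large enough is harmless.
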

\begin{proof}
	By the inversion formula \eqref{eq:eigendecomposition-1D}, we obtain at once that
	\begin{equation}
		t\longmapsto p_t(0)\qquad
		\text{and}\qquad t\longmapsto p_t(0)-p_t(1)
	\end{equation}
	are  decreasing and, for each fixed $t\ge 0$, uniformly (in $N\in \N$) bounded away from zero.	A similar conclusion holds for 
	\begin{align}
		t\longmapsto 3/2\, p_t(0)-2\, p_t(1)+1/2\, p_t(2)\qquad \text{and}\qquad t\longmapsto p_t(0)^2-2\, p_t(1)^2+p_t(0)p_t(2)\comma
	\end{align}
	as a consequence of the following two inequalities, respectively:
	\begin{equation}
		3/2-2\, \cos a + 1/2\, \cos 2a = 4\tonde{\sin\tonde{a/2}}^4\ge 0\comma \qquad a\in \R\comma
	\end{equation}
	\begin{equation}
		1-2\, \cos a\cos b+1/2\, \cos 2a+1/2\, \cos 2b = \tonde{\cos a-\cos b}^2\ge 0\comma\qquad a, b \in \R\fstop	
	\end{equation}
	Therefore, the claims in items \eqref{it:lemma-f-g-1}--\eqref{it:lemma-f-g-2} hold true because $f$ and $g$ in \eqref{eq:g-decomposition}--\eqref{eq:f-decomposition-2D} are products and sums of functions satisfying such properties.
	
	As for item \eqref{it:lemma-f-g-4}, we  claim that $p_t(2)\le p_t(1)$ for all $t\ge 0$. {For this purpose, let us define $t\in [0,\infty)\mapsto q_t\eqdef p_t(1)-p_t(2)$, and observe that, for all $N\ge 4$, 
		$q_0=0$, whereas, since $p_t(1), p_t(2)\to \frac1N$ as $t\to \infty$,  $\lim_{t\to \infty}q_t=0$.
		In view of \eqref{eq:eigendecomposition-1D}, we have, for all $t\ge 0$,
		\begin{equation}\label{eq:q't}
			q_t' = \frac1N\sum_{j=0}^{N-1}\set{2\tonde{\cos\tonde{2\pi j/N}-\cos\tonde{4 \pi j/N}}\tonde{\cos\tonde{2 \pi j/N}-1}} \exp\tonde{-2 \lambda_j^N t}\comma
		\end{equation}from which we obtain $q_0'=1$. Indeed,  $\frac2N\sum_{j=0}^{N-1}\cos\tonde{2 \pi i j/N}\cos\tonde{2 \pi \ell j/N}=0$ if  $(i,\ell)=(0,1), (0,2), (1,2)$, whereas $=1$ if $(i,\ell)=(1,1)$. Furthermore,  $t\mapsto q_t'$ must become negative because $q_0=\lim_{t\to \infty}q_t=0$, and we now argue that it changes sign only once, thus, ensuring that $q_t\ge 0$ for all $t\ge 0$. The expression within curly brackets in \eqref{eq:q't} reads as
		\begin{equation}
			\kappa_j^N\eqdef-{16} \tonde{\sin\tonde{\pi j/N}}^4 \tonde{1/2+\cos\tonde{2 \pi j/N}}\comma
		\end{equation}
		which is non-negative if and only if $j \in [N/3,2N/3]$.  Moreover, since  $\kappa_j^N= \kappa_{N-j}^N$ and $\lambda_j^N = \lambda_{N-j}^N$, we get the following decomposition of $q_t'$ in terms of  non-negative and non-positive parts (in what follows, we omit to write the integer part of $N/3$, $N/2$, $2N/3$):
		\begin{equation}
			q_t'= r_t^-+r_t^+\eqdef \frac2N\sum_{j=1}^{ N/3} \kappa_j^N\exp\tonde{-2\lambda_j^N t} + \frac1N \sum_{j= N/3 +1}^{ 2N/3-1} \kappa_j^N\exp\tonde{-2\lambda_j^N t}\fstop
		\end{equation}  Let $T=T_N> 0$ be the first time $t> 0$ satisfying $q_t' =0$. Then, since $j\in \set{1,\ldots,  N-1 }\mapsto\lambda_j^N \in (0,2]$ is symmetric around $N/2$ and increasing on $\set{1,\ldots, N/2}$,  we get, for all $s\ge 0$, 
		\begin{equation}
			q_{T+s}'\le r_T^-\exp\tonde{-2 \lambda_{\lfloor N/3\rfloor}^Ns}  + r_T^+ \exp\tonde{-2 \lambda_{\lfloor N/3\rfloor+1}^Ns} \le q_T'\exp\tonde{-2\lambda_{\lfloor N/3\rfloor}^N s}\le 0\comma 
		\end{equation}
		where the last step used the definition of $T$. Altogether, since we showed that $q_0'=1>0$, this proves that, for all $N\ge 4$ and $t\ge 0$, $q_t\ge 0$,
		that is,
		\begin{equation}\label{eq:p21}
			p_t(2)\le p_t(1)\comma \qquad t \ge 0\fstop 
	\end{equation}}
	Finally, inserting \eqref{eq:p21} into the expressions for $f$ in \eqref{eq:f-decomposition-1D}--\eqref{eq:f-decomposition-2D}, we obtain, 
	for $d=1$,
	\begin{align}
		f(t)\le 3/2\tonde{p_t(0)-p_t(1)}=3/2\, g(t)\comma\qquad t \ge 0\comma
	\end{align}
	while,  for $d\ge 2$,	
	\begin{align}
		f(t)&\le  p_t(0)^{d-2}\quadre{d\tonde{p_t(0)-p_t(1)}^2+1/2\tonde{p_t(0)-p_t(1)}^2}\\
		&= \tonde{d+1/2}p_t(0)^{d-2}\tonde{p_t(0)-p_t(1)}^2\\
		&\le \tonde{d+1/2}p_t(0)^{d-1}\tonde{p_t(0)-p_t(1)} = \tonde{d+1/2}g(t)\comma\qquad t \ge 0\comma
	\end{align}
	where for the last inequality we used the crude bound $p_t(0)-p_t(1)\le p_t(0)$.
	This concludes the proof of the lemma.
\end{proof}

\subsection{Lower bound in Theorem \ref{th:local-conv}}\label{sec:proof-local-conv-lb}
{In view of \eqref{eq:dir-form-S}--\eqref{eq:dir-form-S0},  \eqref{eq:f}--\eqref{eq:g-u}, and the infinite series representation in \eqref{eq:infinite-series-repr-u},
	we recast the lower bound in \eqref{eq:local-conv} in terms of the functions $u(t)$ and $g(t)$. This is the content of the following lemma. Remark that, as in Theorem \ref{th:local-conv}, for $t> 0$, we aim at a strict, uniform in $N\in \N$, inequality $u\gneq g$, improving upon the obvious estimate $u\ge g$ given in \eqref{eq:ineq-S-S0}.}
\begin{lemma}\label{lemma:local-conv-lb}
	For all $d\ge 1$, there exists $c=c(d)\in (0,1)$ satisfying, for all $N\in \N$ large enough  and for all  $t\ge 0$,
	\begin{equation}\label{eq:u-g-lb}
		u(t)\ge \exp\tonde{c\tonde{1\wedge t}}g(t)\fstop
	\end{equation}
\end{lemma}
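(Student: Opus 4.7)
The plan is to exploit the infinite-series representation \eqref{eq:infinite-series-repr-u} to get a first one-step improvement on $u \ge g$, and then to amplify it using the monotonicity and uniform positivity gathered in Lemma~\ref{lemma:f-g-positive}. Concretely, since every term of $u = \sum_{k\ge 0} g \ast f^{\ast k}$ is non-negative by Lemma~\ref{lemma:f-g-positive}\eqref{it:lemma-f-g-1}, truncating at $k=1$ gives
\begin{equation}\label{eq:plan-first-step}
u(t) \;\ge\; g(t) + (g \ast f)(t), \qquad t \ge 0.
\end{equation}
By Lemma~\ref{lemma:f-g-positive}\eqref{it:lemma-f-g-2}, for $N$ large enough $g$ is decreasing, so $g(s) \ge g(t)$ for all $s \in [0,t]$; pulling $g(t)$ out of the convolution in \eqref{eq:plan-first-step} yields the clean inequality
\begin{equation}\label{eq:plan-key}
\frac{u(t)}{g(t)} \;\ge\; 1 + \int_0^t f(s)\, \dd s.
\end{equation}

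The second step is to bound $\int_0^t f(s)\, \dd s$ from below uniformly in $N$, separating the regimes $t\le 1$ and $t\ge 1$. Let $c_1 > 0$ be a uniform-in-$N$ lower bound for $f(1)$, which exists by Lemma~\ref{lemma:f-g-positive}\eqref{it:lemma-f-g-1}. When $t\in (0,1]$, monotonicity of $f$ (Lemma~\ref{lemma:f-g-positive}\eqref{it:lemma-f-g-2}) gives $\int_0^t f(s)\, \dd s \ge t\, f(t) \ge t\, f(1) \ge c_1 t$, so \eqref{eq:plan-key} implies $u(t)/g(t) \ge 1 + c_1 t$. When $t \ge 1$, the same monotonicity yields $\int_0^t f(s)\, \dd s \ge \int_0^1 f(s)\, \dd s \ge f(1) \ge c_1$, hence $u(t)/g(t) \ge 1 + c_1$.

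Finally, one chooses $c \in (0,1)$ small enough so that $1 + c_1 t \ge e^{ct}$ on $[0,1]$ (e.g.\ $c \le c_1/2$, using $\log(1+x) \ge x - x^2/2$) and simultaneously $1 + c_1 \ge e^c$ (e.g.\ $c \le \log(1+c_1)$); this yields \eqref{eq:u-g-lb} in both regimes. I expect no serious obstacle here: everything flows from \eqref{eq:plan-first-step} once the monotonicity of $g$ is available, and the only delicate point is to ensure that the constant $c$ is genuinely independent of $N$, which is guaranteed by the uniform positivity statement of Lemma~\ref{lemma:f-g-positive}\eqref{it:lemma-f-g-1} applied at the single time $t = 1$.
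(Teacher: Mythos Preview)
Your argument is correct and, in fact, more economical than the paper's. You truncate the series \eqref{eq:infinite-series-repr-u} at $k=1$, pull $g(t)$ out of the single convolution using the monotonicity of $g$, and obtain the linear lower bound $u(t)/g(t)\ge 1+\int_0^t f(s)\,\dd s\ge 1+c_1(1\wedge t)$, which is all that is needed to manufacture the exponential factor $e^{c(1\wedge t)}$ after shrinking $c$. The paper instead keeps the full series and proves by induction on $k$ that $\int_0^t f^{\ast k}(s)\,\dd s\ge (tf(t))^k/k!$, leading to the sharper intermediate inequality $u(t)\ge \big(\sup_{s\le t}e^{sf(s)}\big)\,g(t)$. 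That extra strength is not used anywhere else, so your shortcut loses nothing for the purposes of Theorem~\ref{th:local-conv}.

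One cosmetic point: your suggested choice $c\le c_1/2$ via $\log(1+x)\ge x-x^2/2$ only yields $1+c_1 t\ge e^{ct}$ on $[0,1]$ when $c_1\le 1$. This is harmless --- you are free to replace $c_1$ by $c_1\wedge 1$, or simply take $c>0$ small enough that $c e^c\le c_1$ (so that $t\mapsto 1+c_1 t-e^{ct}$ is nondecreasing on $[0,1]$) and $c\le \log(1+c_1)$. Either way the existence of a suitable $c=c(d)\in(0,1)$ is clear.
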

\begin{proof}
	We show that,  for all $d\ge 1$ and $N\in \N$, 
	\begin{align}\label{eq:claim-u-g-lb}
		u(t)\ge \tonde{\sup_{s\in [0,t]}\exp\tonde{sf(s)}} g(t)\comma\qquad t \ge 0\comma
	\end{align}
	holds true. This suffices since $f$ is decreasing, $f(0)=d+1/2$ and,  for each $s> 0$, $f(s)$ is uniformly (in $N\in \N$) bounded away from zero (Lemma \ref{lemma:f-g-positive}).	
	
	In what follows, we derive \eqref{eq:claim-u-g-lb} by showing that
	\begin{equation}\label{eq:fk}
		\int_0^t f^{\ast k}(s)\, \dd s \ge \frac{\tonde{tf(t)}^k}{k!}\comma\qquad t \ge 0\comma
	\end{equation}
	holds for all $k\in \N$. We show \eqref{eq:fk} by induction on $k\in \N$.
	The claim for $k=1$, namely $\int_0^t f(s)\, \dd s \ge tf(t)$,	 clearly holds since $f$ is decreasing. Fix $k\ge 2$, and assume  \eqref{eq:fk} to be true for   $k-1$; then, for all $t\ge 0$,
	\begin{align}
		\int_0^t f^{\ast k}(s)\, \dd s&=\int_0^t  \int_0^s f(s-r)\, f^{\ast (k-1)}(r)\, \dd r\,\dd s\\
		&\ge	\int_0^t  f(s)\int_0^sf^{\ast(k-1)}(r)\, \dd r\,\dd s\\
		&\ge\int_0^tf(s)^k\frac{s^{k-1}}{(k-1)!}\,\dd s\\
		&\ge f(t)^k\int_0^t \frac{s^{k-1}}{(k-1)!}\,\dd s = \frac{\tonde{tf(t)}^k}{k!}\comma
	\end{align}
	where for the first and third inequalities we used the monotonicity and positivity of $f$, while for the second one we used the induction hypothesis. This shows the validity of \eqref{eq:fk}.
	The desired conclusion now follows by the infinite-series representation of $u(t)$ in \eqref{eq:infinite-series-repr-u}, as well as the monotonicity of $g$ and the positivity of $f$ (Lemma \ref{lemma:f-g-positive}): for all $t\ge s \ge 0$,
	\begin{align}
		u(t)&=
		g(t)+\sum_{k=1}^\infty\int_0^t g(t-r)\, f^{\ast k}(r)\, \dd r\\
		&\ge g(t)+ g(t)\sum_{k=1}^\infty \int_0^s f^{\ast k}(r)\, \dd r\\
		&\ge g(t)\tonde{\sum_{k=0}^\infty \frac{\tonde{sf(s)}^k}{k!}} = g(t)\exp\tonde{sf(s)}\comma
	\end{align}
	where the last inequality is a consequence of \eqref{eq:fk} and positivity of $g$. Optimizing over $s\le t$ yields \eqref{eq:claim-u-g-lb} and, thus, concludes the proof.
\end{proof}
\subsection{Upper bound in Theorem \ref{th:local-conv}}
\label{sec:proof-local-conv-ub}
Also for the derivation of the upper bound in Theorem \ref{th:local-conv}, we exploit the infinite-series representation \eqref{eq:infinite-series-repr-u} for $u$ involving convolutions of the functions $f$ and $g$. However, since we are not interested in optimizing constants in this case, we may employ $f\le \tonde{d+1/2}g$ from Lemma \ref{lemma:f-g-positive}\eqref{it:lemma-f-g-4} and positivity of $f$ and $g$ (Lemma \ref{lemma:f-g-positive}\eqref{it:lemma-f-g-1}) to	 estimate $u(t)$ from above by $\tilde u(t)$, defined as
\begin{equation}\label{eq:u-tilde}
	\tilde u\eqdef
	\sum_{k=1}^\infty \tilde g^{\ast k}\comma\qquad \text{with}\ \tilde g\eqdef \tonde{d+1/2}g\fstop
\end{equation}
Hence, the following estimate combined with \eqref{eq:estimate-dir-RW} (cf.\ \eqref{eq:g-u} and \eqref{eq:dir-form-S0}) would conclude the proof of the upper bound in Theorem \ref{th:local-conv}.
\begin{lemma}\label{lemma:final-local-conv}
	For all $d\ge 1$ and for all $N\in \N$ large enough, we have
	\begin{equation}\label{eq:final-lemma-ub}
		\sum_{k=1}^\infty \tilde g^{\ast k}(t)\le \frac{C}{N^{d+2}\wedge t^{d/2+1}}\exp\tonde{-2t/t_{\rm rel}+\frac{Bt}{N^{d+2}}}\comma \qquad t> 0\comma
	\end{equation}
	for some constants $B,C>0$ (depending only on $d\ge1$).
\end{lemma}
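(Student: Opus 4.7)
I would use a supersolution / monotone-iteration argument applied to the renewal equation for $U(t) := \sum_{k\ge 1}\tilde g^{\ast k}(t)$, namely $U = \tilde g + \tilde g \ast U$. Set $\Phi(t) := e^{-2t/t_{\rm rel}}/\phi(t)$ with $\phi(t) := (N^{d+2}\wedge t^{d/2+1})\vee 1$. From \eqref{eq:estimate-dir-RW} and the identity $g(t) = (dN^d)^{-1}\cE(\pi_t(0,\emparg)/\pi)$ recorded in \eqref{eq:dir-form-S0}, one has $\tilde g(t) \le C_\star \Phi(t)$ for some $C_\star=C_\star(d)$. The target bound $U(t) \le A\Phi(t) e^{Bt/N^{d+2}}$ (with $A, B$ depending only on $d$) would then follow by monotone iteration once we verify the pointwise supersolution inequality
\[
A\Phi(t) e^{Bt/N^{d+2}} \ge \tilde g(t) + \int_0^t \tilde g(t-s)\,A\Phi(s) e^{Bs/N^{d+2}}\,ds.
\]

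Dividing by $A\Phi(t) e^{Bt/N^{d+2}}$, this reduces to $1 - J(t) \ge (C_\star/A)e^{-Bt/N^{d+2}}$, where, after the change of variable $u = t - s$ and cancellation of the exponential factor $e^{-2t/t_{\rm rel}}$ in $\Phi(s)/\Phi(t)$,
\[
J(t) \le C_\star\,\phi(t) \int_0^t \frac{e^{-Bu/N^{d+2}}}{\phi(u)\,\phi(t-u)}\,du.
\]
I would estimate this integral by splitting $[0,t]$ according to which branch of the minimum dominates in $\phi$. For $t \le N^2$, splitting $u$ into $[0,1]$, $[1,t/2]$, $[t/2,t-1]$, $[t-1,t]$ and using $d/2+1 > 1$ would give $J(t) = O_d(1)$. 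For $t \ge N^2$, the boundary regions (where $u$ or $t-u$ is below $N^2$) behave as in the $t\le N^2$ case, while the central region $u, t-u \ge N^2$ contributes $\phi(t)\cdot (BN^{d+2})^{-1} = O(1/B)$ thanks to the exponential decay $e^{-Bu/N^{d+2}}$.

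The hard part will be strictness: the above gives $\sup_t J(t) \le K_0(d) + K_1(d)/B$, and one needs this uniformly strictly below one to close the supersolution. The structural fact making this achievable is the sub-unit total mass
\[
\int_0^\infty \tilde g(u)\,du = \frac{(d+1/2)(N^d-1)}{2dN^d} < 1,
\]
obtained by integrating \eqref{eq:infinitesimal-RW} on $[0,\infty)$. This sub-unit mass forces the rightmost zero of $1 - \hat{\tilde g}(\lambda)$ to sit at $\lambda^* = -2/t_{\rm rel} + O(1/N^{d+2})$: a residue computation of $\hat{\tilde g}$ at its first pole $-2/t_{\rm rel}$, with residue of order $(d+1/2)\cdot 2\lambda_1/N^d$, produces exactly this shift and pinpoints the correct value of $B$ in the ansatz. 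To turn this asymptotic into a uniform bound on $J$, I would refine the estimate in the short-time boundary regions by replacing $\tilde g \le C_\star \Phi$ with the exact spectral expression $\tilde g(u) = (d+1/2)\,p_u(0)^{d-1}(p_u(0)-p_u(1))$, and use the sub-unit $L^1$ norm to absorb the large constant $K_0(d)$. The result would be $J(t) \le 1 - c(d)$ uniformly for $B$ and $N$ large enough; taking $A := 2C_\star/c(d)$ then closes the supersolution and yields \eqref{eq:final-lemma-ub}.
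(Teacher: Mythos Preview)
Your supersolution scheme has a genuine gap at $d=1$ that the proposed refinement does not close. After your reduction, closing the monotone iteration requires
\[
J(t):=\phi(t)\int_0^t \frac{h(u)}{\phi(t-u)}\,\dd u\le 1-c,\qquad h(u):=\tilde g(u)\,e^{2u/t_{\rm rel}-Bu/N^{d+2}},
\]
uniformly in $t$. You correctly record the sub-unit mass $\int_0^\infty h\le\theta:=\tfrac{d+1/2}{2d}<1$, but $J(t)$ carries the weight $\phi(t)/\phi(t-u)\ge 1$, and for $d=1$ the overshoot is fatal regardless of how sharply you estimate $\tilde g$. Take $1\ll t\ll N^2$ (so $h\approx\tilde g$ and the $B$-factor is inactive), use the exact tail $\tilde g(u)\sim Cu^{-3/2}$ with $C=\tfrac{3}{16\sqrt\pi}$, and split $[0,t]$ at $u=1$ and $u=t-1$. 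A direct computation gives
\[
J(t)\longrightarrow\frac{d+\tfrac12}{2d}+\frac{(d+2)\,C}{d}=\tfrac34+3C\approx 1.07\qquad(t\to\infty),
\]
so no value of $A$ makes $A\Phi(t)e^{Bt/N^{d+2}}$ a supersolution. The point is that the sub-unit $L^1$ norm controls only the region where $u$ is small (weight $\approx 1$); the overshoot comes from the region where $t-u$ is small (weight $\sim\phi(t)$), where you are forced to use the \emph{pointwise} decay of $\tilde g$, and that contribution alone already sums to $3C>\tfrac14$ for $d=1$. Your Laplace-transform remark locates the right $B$ but does not bear on this issue.

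The paper avoids seeking a single supersolution altogether. It first fixes $B$ so that $\int_0^\infty h\le\theta<1$ (splitting $[0,\infty)$ at $\eps N^2$ and $N^2$, exactly your sub-unit-mass observation sharpened), and then bounds each $h^{\ast k}(t)$ directly via the Banerjee--Burdzy induction: at step $k$ the convolution $h\ast h^{\ast(k-1)}$ is split at $s=a_kt$ for a fixed geometric sequence $(a_m)$ with $\sum_m a_m=\tfrac12$, using $\int h\le\theta$ on the long piece and the decreasing envelope $h\le\tilde h(s):=C'/(s^{d/2+1}\wedge N^{d+2})$ on the short one. This yields
\[
h^{\ast k}(t)\le 2\theta^k\,\tilde h\!\left(t-t\sum_{m\le k}a_m\right)+\theta^{k-1}\sum_{m\le k}\tilde h(ta^*a_m),
\]
which sums in $k$ to $O(\tilde h(t))$ because $a_m$ decays geometrically and one can arrange $\theta/\sigma<1$. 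The iterated splitting---evaluating $\tilde h$ at a whole sequence of shrinking arguments rather than at $t$ alone---is precisely what compensates for the failure of the one-shot supersolution ansatz.
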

Before entering the details of the proof of Lemma \ref{lemma:final-local-conv}, we remark that, compared to the bound
\begin{equation}\label{eq:ub-g-tilde}
	\tilde g(t)\le \frac{C'}{N^{d+2}\wedge t^{d/2+1}}\exp\tonde{-2t/t_{\rm rel}}\comma\qquad t \ge 0\comma
\end{equation}
for some $C'=C'(d)>0$, 	estimating the infinite series in \eqref{eq:final-lemma-ub} requires an extra factor $\exp\tonde{\frac{Bt}{N^{d+2}}}$. With this approach , this exponential factor is actually unavoidable, as we now explain. Recall the lower bound for $g$ (thus, for $\tilde g$, too) in \eqref{eq:estimate-dir-RW}, analogous to \eqref{eq:ub-g-tilde}: for some $C''=C''(d)>0$,
\begin{equation}
	\tilde g(t)\ge \frac{C''}{\tonde{N^{d+2}\wedge t^{d/2+1}}\vee 1}\exp\tonde{-2t/t_{\rm rel}}\comma\qquad t \ge 0\fstop
\end{equation}
Finally, letting $c=c(t)\eqdef \frac{C''}{N^{d+2}}\car_{[0,\infty)}(t)$, we get $\tilde g(t)\ge c\exp\tonde{-2t/t_{\rm rel}}$ and, thus,	
\begin{equation}
	\sum_{k=1}^\infty \tilde g^{\ast k}(t)\ge \exp\tonde{-2t/t_{\rm rel}}\sum_{k=1}^\infty c^{\ast k}(t)=\exp\tonde{-2t/t_{\rm rel}}\sum_{k=1}^\infty \frac{(ct)^k}{k!}= \exp\tonde{-2t/t_{\rm rel}+ct}\fstop
\end{equation}
\begin{proof}[Proof of Lemma \ref{lemma:final-local-conv}]
	Since we can follow closely the proof in \cite[Lemma 5.6]{banerjee2020rates}, our task  boils down to establishing the analogues of the two preliminary lemmas therein, namely \cite[Lemmas 5.4 \& 5.5]{banerjee2020rates}. 
	
	We start by showing that,  for all $N\in \N$ large enough, we have
	\begin{equation}\label{eq:integral-tilde-g}
		\int_0^\infty \tilde g(t)\, \dd t \le \frac{d+1/2}{2d}\comma
	\end{equation}
	from which we then derive, for some $B=B(d)>0$ and $\theta\in (0,1)$, 
	\begin{equation}\label{eq:integral-tilde-g2}
		\int_0^\infty \exp\tonde{2t/t_{\rm rel}-\frac{Bt}{N^{d+2}}}\tilde g(t)\, \dd t\le \theta\fstop
	\end{equation}
	
	In order to prove \eqref{eq:integral-tilde-g}, recall $\tilde g\eqdef \tonde{d+1/2}g$, as well as \eqref{eq:g-u} and \eqref{eq:dir-form-S0}. Henceforth, 
	\begin{equation}
		\int_0^\infty \tilde g(t)\, \dd t = \frac{d+1/2}{2dN^d}\int_0^\infty 2\cE\bigg(\frac{\pi_t(0,\emparg)}{\pi}\bigg)\dd t = \frac{d+1/2}{2dN^d}\norm{\frac{\car_0}{\pi}-1}_2^2\le \frac{d+1/2}{2d}\comma
	\end{equation} 
	where the last step follows from	 $\norm{\frac{\car_0}{\pi}-1}_2^2=N^d-1$.	This yields \eqref{eq:integral-tilde-g}.
	
	For what concerns \eqref{eq:integral-tilde-g2}, arguing similarly as in the proof of \cite[Lemma 5.5]{banerjee2020rates}, we split the integral in \eqref{eq:integral-tilde-g2} into three pieces, so to obtain
	\begin{equation}\label{eq:integral-tilde-g3}
		\int_0^\infty \exp\tonde{2t/t_{\rm rel}-\frac{Bt}{N^{d+2}}}\tilde g(t)\, \dd t =	 \int_0^{\eps N^2}\cdots + \int_{\eps N^2}^{N^2} \cdots  + \int_{N^2}^\infty\cdots\comma
	\end{equation}
	for some well-chosen $\eps \in (0,1)$.
	We first deal with the first integral on the right-hand side above. Since $t_{\rm rel}\ge \delta N^2$, for some $\delta>0$, for all $\gamma\in (0,1/3)$, there exists $\eps=\eps(\delta,\gamma) >0		$ small enough such that, for all $N \in \N$ large enough,
	\begin{equation}
		\exp\tonde{2\eps N^2/t_{\rm rel}}\le1+\gamma\fstop
	\end{equation}
	With such a choice of $\gamma,\eps>0$, \eqref{eq:integral-tilde-g} ensures that
	\begin{equation}\label{eq:integral1-tilde-g}
		\int_0^{\eps N^2} \exp\tonde{2t/t_{\rm rel}-\frac{Bt}{N^{d+2}}}\tilde g(t)\, \dd t\le \tonde{1+\gamma}\int_0^\infty \tilde g(t)\, \dd t\le \frac{\tonde{1+\gamma}\tonde{d+1/2}}{2d}<1\fstop
	\end{equation}
	Note the strict inequality above. Now, the second and third integrals on the right-hand side of \eqref{eq:integral-tilde-g3} may be turned as small as desired via the estimate in \eqref{eq:ub-g-tilde}, by choosing first $N\in \N$ large enough and then $B>0$ large enough, respectively. Indeed, 
	\begin{equation}\label{eq:integral2-tilde-g}
		\int_{\eps N^2}^{N^2} \exp\tonde{2t/t_{\rm rel}-\frac{Bt}{N^{d+2}}}\tilde g(t)\, \dd t\le C'\int_{\eps N^2}^{N^2}t^{-(d/2+1)}\, \dd t\le CN^{-d}\comma
	\end{equation}	 
	for some $C=C(d,\eps)>0$, and
	\begin{equation}\label{eq:integral3-tilde-g}
		\int_{N^2}^{\infty} \exp\tonde{2t/t_{\rm rel}-\frac{Bt}{N^{d+2}}}\tilde g(t)\, \dd t\le \frac{C'}{N^{d+2}}\int_{N^2}^\infty \exp\tonde{-\frac{Bt}{N^{d+2}}}\dd t = \frac{C'}{B}\fstop
	\end{equation}
	Hence, \eqref{eq:integral1-tilde-g}, \eqref{eq:integral2-tilde-g} and \eqref{eq:integral3-tilde-g} yield \eqref{eq:integral-tilde-g2} for some $\theta\in (\frac{\tonde{1+\gamma}\tonde{d+1/2}}{2d},1)$ by choosing $\gamma>0$ small enough, and, then, $N\in\N$ and $B>0$ large enough.

	From this stage, the rest of the proof follows as in \cite[Lemma 5.6]{banerjee2020rates}; we streamline the main arguments in \cite[pp.\ 1150--1153]{banerjee2020rates} for the reader's convenience, while implementing the small modifications required. Recall $\theta \in (0,1)$ from \eqref{eq:integral-tilde-g2}, and fix  $\sigma \in (0,1)$ such that $\theta/\sigma \in (0,1)$.	For such a $\sigma \in (0,1)$, we define the sequence 
	\begin{equation}
		a_m\eqdef \frac12 \frac{\sigma^{\frac{2\tonde{m-1}}{d+2}}}{\sum_{j=1}^\infty \sigma^{\frac{2\tonde{j-1}}{d+2}}}\comma\quad m\ge 1\comma
	\end{equation}
	which, by definition, satisfies
	\begin{equation}\label{eq:properties-am}
		\sum_{m=1}^\infty a_m=1/2\qquad\text{and}\qquad a^*\eqdef \prod_{m=1}^\infty \tonde{1-a_m}>0\fstop
	\end{equation} Next, by adopting the following shorthand notation for the integrand in \eqref{eq:integral-tilde-g2}, 
	\begin{equation}\label{eq:rhs}
		h(t)\eqdef \exp\tonde{2t/t_{\rm rel}-\frac{Bt}{N^{d+2}}}\tilde g(t)\le \frac{C'}{t^{d/2+1}\wedge N^{d+2}}\defeq \tilde h(t)\comma\qquad t > 0\comma
	\end{equation}
	we claim that the following holds true: for all $k\in \N$ and $t>0 $, 
	\begin{equation}\label{eq:induction-banerjee-burdzy}
		h^{\ast k}(t)\le 2\theta^k \tilde h\bigg(t-t\sum_{m=1}^ka_m\bigg) + \theta^{k-1}\sum_{m=1}^k \tilde h(ta^*a_m)\fstop
	\end{equation}
	We refer to \cite[Eq.\ (5.35) \& pp.\ 1150--1151]{banerjee2020rates}  for the proof by induction of this inequality. Here, we only remark that  $t\mapsto \tilde h(t)$ given in \eqref{eq:rhs} (and, thus, the right-hand side of \eqref{eq:induction-banerjee-burdzy}) is a non-increasing function; this fact,   \eqref{eq:properties-am}, and the inequalities in \eqref{eq:integral-tilde-g2} and \eqref{eq:rhs} are the only inputs  required for the proof of \eqref{eq:induction-banerjee-burdzy}.	
	
	By \eqref{eq:induction-banerjee-burdzy} and the monotonicity of $t\mapsto \tilde h(t)$, we obtain, for all $t>0$,
	\begin{align}
		\sum_{k=1}^\infty h^{\ast k}(t)&\le 2\sum_{k=1}^\infty \theta^k \tilde h\bigg(t-t\sum_{m=1}^k a_m\bigg)
		+ \sum_{k=1}^\infty\theta^{k-1}\sum_{m=1}^k \tilde h(ta^*a_m)\\
		&\le \frac{2\theta}{1-\theta}\,\tilde h(t/2) +\frac1{1-\theta} \sum_{m=1}^\infty \theta^{m-1}\, \tilde h(ta^*a_m)
		\fstop \end{align}
	Now, for $t\le 2N^2$, since $ta^*a_m\le ta_m\le N^2$, we further get, for some $C=C(d)>0$,
	\begin{align}
		\sum_{k=1}^\infty h^{\ast k}(t)&\le \frac{2\theta C}{1-\theta}\frac{1}{t^{d/2+1}} + \frac{C'}{1-\theta}\sum_{m=1}^\infty \frac{\theta^{m-1}}{\tonde{ta^*a_m}^{d/2+1}}\\
		&\le \frac{2\theta C}{1-\theta}\frac{1}{t^{d/2+1}}+\frac{C''}{1-\theta}\frac{1}{t^{d/2+1}}\sum_{m=1}^\infty \tonde{\frac{\theta}{\sigma}}^{m-1}\\
		&\le \frac{C}{t^{d/2+1}}\comma
	\end{align} 
	where for the last inequality we used that $\theta/\sigma \in (0,1)$. Analogously, we have
	\begin{equation}
		\sum_{k=1}^\infty \tilde h^{\ast k}(t)\le \frac{C}{N^{d+2}}\comma\qquad t >2N^2\fstop
	\end{equation}
	Since
	\begin{equation}\label{eq:series-h-tilde-g}
		\sum_{k=1}^\infty h^{\ast k}(t)= \exp\tonde{2t/t_{\rm rel}-\frac{Bt}{N^{d+2}}}\sum_{k=1}^\infty \tilde g^{\ast k}(t)\comma \qquad t\ge 0	\comma
	\end{equation}
	this concludes the proof of the lemma.
\end{proof}
\subsection{Conclusion of the proof of Theorem \ref{th:local-conv}}\label{sec:proof-local-conv-conclusion}
\begin{proof}[Proof of Theorem \ref{th:local-conv}] Recall  \eqref{eq:dir-form-S}, \eqref{eq:dir-form-S0}, and \eqref{eq:g-u}. Then, the lower bound in \eqref{eq:local-conv} is the content of Lemma \ref{lemma:local-conv-lb}. The upper bound for times $t\ge 1$ may be obtained from $u\le \tilde u$ (defined in \eqref{eq:u-tilde}),  Lemma \ref{lemma:final-local-conv}, and the first inequality in \eqref{eq:estimate-dir-RW}. The same upper bound, but for small times $t\in [0,1]$, follows by the very definition of $u$ in \eqref{eq:g-u} and
	\begin{equation}\label{eq:u-small-times}
		u(t)=S_t(0,0)-S_t(e,0)\le S_t(0,0)\le 1\fstop
	\end{equation}
	This concludes the proof of the theorem.	
\end{proof}

\section{Proof of Theorem \ref{th:concentration}}\label{sec:proof-concentration}
We divide the proof of Theorem \ref{th:concentration} on the phenomenon of early concentration  into two parts. First, we express the quantity of interest in terms of (mean) Dirichlet forms for ${\rm Avg}(\T^d_N)$ and ${\rm RW}(\T^d_N)$. This is carried out in Lemma \ref{lemma:concentration1},  by specializing some expressions obtained in \cite{quattropani2021mixing,caputo_quattropani_sau_cutoff_2023} to our setting. Finally, thanks to this rewriting,  we exploit the estimates (and the proof arguments) from Theorem \ref{th:local-conv} to conclude.
\begin{lemma}\label{lemma:concentration1} For all $d\ge 1$,  $N \in \N$ and $t\ge 0$, we have
	\begin{align}\label{eq:Nt}
		&	\E\bigg[\bigg\|\frac{\eta_t(0,\emparg)}{\pi}-\frac{\pi_t(0,\emparg)}{\pi}\bigg\|_2^2\bigg] = \frac1{dN^d}\int_0^t \E\bigg[\cE\bigg(\frac{\eta_{t-s}(0,\emparg)}{\pi}\bigg)\bigg]\cE\bigg(\frac{\pi_s(0,\emparg)}{\pi}\bigg)\, \dd s\\
		&\qquad = dN^d \int_0^t \tonde{S_{t-s}(0,0)-S_{t-s}(e,0)}\tonde{S_s^0(0,0)-S_s^0(e,0)}\dd s	\comma
		\label{eq:Nt3}
	\end{align}
	for some $e\in \T^d_N$ with $|e|=1$.
\end{lemma}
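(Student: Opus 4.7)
The plan is to reduce the $L^2$-distance between $\eta_t(0,\emparg)$ and its annealed law $\pi_t(0,\emparg)$ to a difference of diagonal entries of the two difference-process kernels, $S_t(0,0)-S^0_t(0,0)$, and then to reuse the Duhamel-type (integration-by-parts) identity already set up in Section \ref{sec:proof-local-conv-renewal}.

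First, since $\pi\equiv 1/N^d$, write
\begin{equation}
\E\bigg[\bigg\|\frac{\eta_t(0,\emparg)}{\pi}-\frac{\pi_t(0,\emparg)}{\pi}\bigg\|_2^2\bigg]
= N^d \sum_{x\in \T^d_N}\var\tonde{\eta_t(0,x)}.
\end{equation}
By the first-order duality \eqref{eq:mean}, $\pi_t(0,x)^2=\Pr_0^{\rm RW}(X_t=x)\,\Pr_0^{\rm RW}(Y_t=x)$ is the probability that two \emph{independent} copies of ${\rm RW}(\T^d_N)$ both land at $x$, whereas by the second-order duality \eqref{eq:dual-CRW}, $\E[\eta_t(0,x)^2]=\Pr_{0,0}^{\rm CRW}(X_t=x,Y_t=x)$. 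Summing the difference over $x$ and using that $X_t=Y_t$ if and only if $Z_t=0$ (in both the interacting and the independent case), one gets
\begin{equation}
\sum_{x\in \T^d_N} \var(\eta_t(0,x)) = S_t(0,0)-S^0_t(0,0).
\end{equation}

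Next, I would rerun the Duhamel computation leading to \eqref{eq:ibp-final}, but evaluate it at $x=0$ only instead of forming the difference at $x=0$ and $x=e$. The same calculation gives
\begin{equation}
S_t(0,0)-S_t^0(0,0)= \int_0^t \tonde{S_s(0,0)-S_s(e,0)}\sum_{\substack{y\in \T^d_N\\|y|=1}} \tfrac12 \tonde{S_{t-s}^0(0,0)-S_{t-s}^0(0,y)}\, \dd s.
\end{equation}
By the symmetry and translation invariance of $Z^0_t$, each of the $2d$ terms in the inner sum equals $\tfrac12(S^0_{t-s}(0,0)-S^0_{t-s}(e,0))$, so the inner sum collapses to $d\tonde{S^0_{t-s}(0,0)-S^0_{t-s}(e,0)}$. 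Multiplying by $N^d$ and performing the change of variables $s\leftrightarrow t-s$ yields exactly \eqref{eq:Nt3}. To obtain \eqref{eq:Nt}, substitute \eqref{eq:dir-form-S} and \eqref{eq:dir-form-S0} into the two factors of the integrand.

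There is essentially no serious obstacle: the only small bookkeeping point is to apply the second-order duality to both the true process and to two \emph{independent} walks so that the subtracted term in the sum of variances is recognized as $S^0_t(0,0)$, after which the rest is a verbatim reuse of the integration-by-parts computation from Section \ref{sec:proof-local-conv-renewal} followed by the symmetry collapse of the boundary sum.
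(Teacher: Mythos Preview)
Your argument is correct and takes a genuinely different route from the paper. The paper imports a formula from \cite[Proposition~2.5]{caputo_quattropani_sau_cutoff_2023}, obtaining
\[
\cN_t=\frac{N^d}{2}\int_0^t \frac12\sum_{x}\sum_{|x-y|=1}\tonde{\pi_s(0,x)-\pi_s(0,y)}^2\Phi_{t-s}(x,y)\,\dd s,
\]
with $\Phi_t(x,y)$ defined through ${\rm CRW}$, and then simplifies $\Phi_t(x,y)=S_t(0,0)-S_t(e,0)$ by translation invariance before invoking \eqref{eq:dir-form-S}--\eqref{eq:dir-form-S0}. You instead bypass this external input: recognising $\cN_t=N^d\sum_x\var(\eta_t(0,x))=N^d\tonde{S_t(0,0)-S_t^0(0,0)}$ via the dualities \eqref{eq:mean} and \eqref{eq:dual-CRW}, and then evaluating the already-proved Duhamel identity of Section~\ref{sec:proof-local-conv-renewal} at $x=0$, where the boundary sum collapses to $d\tonde{S^0_{t-s}(0,0)-S^0_{t-s}(e,0)}$. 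Your approach is entirely self-contained within the paper and arguably more transparent in this torus setting; the paper's route, by contrast, rests on a formula valid on general graphs, which is why it was natural there to quote it rather than rederive it.
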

\begin{proof}
	Let $\cN_t$ denote the left-hand side of \eqref{eq:Nt}. Then, by the last two displays in \cite[Proposition 2.5]{caputo_quattropani_sau_cutoff_2023}, we have
	\begin{equation}\label{eq:Nt2}
		\cN_t= \frac{N^d}{2}\int_0^t \frac12 \sum_{x\in \T^d_N}\sum_{\substack{y\in \T^d_N\\
				|x-y|=1}}\tonde{\pi_s(0,x)-\pi_s(0,y)}^2 \Phi_{t-s}(x,y)\,\dd s\comma
	\end{equation}
	where $\Phi_t(x,y)$ is defined in terms of ${\rm CRW}(\T^d_N)$ (Section \ref{sec:crw}) as follows:
	\begin{equation}
		\Phi_t(x,y)\eqdef \frac12 \tonde{\Pr_{x,x}^{\rm CRW}\big(X_t=Y_t\big)+\Pr_{y,y}^{\rm CRW}\big(X_t=Y_t\big)-2 \Pr_{x,y}^{\rm CRW}\big(X_t=Y_t\big)}\fstop
	\end{equation}
	By translation invariance of the dynamics of ${\rm CRW}(\T^d_N)$ and recalling the definition of the difference process $Z_t$ and its kernel $S_t$ (Section \ref{sec:difference-process}), we obtain, for all nearest neighbor vertices $x, y\in \T^d_N$, 
	\begin{equation}
		\Phi_t(x,y)= \Pr_{0,0}^{\rm CRW}\big(X_t=Y_t\big)-\Pr_{0,e}^{\rm CRW}\big(X_t=Y_t\big)= S_t(0,0)-S_t(e,0)\comma
	\end{equation}
	where $e\in \T^d_N$ is any vertex satisfying $|e|=1$. Plugging this identity into \eqref{eq:Nt2}, we get
	\begin{align}
		\cN_t=\int_0^t \tonde{S_{t-s}(0,0)-S_{t-s}(e,0)} \cE\bigg(\frac{\pi_s(0,\emparg)}{\pi}\bigg)\, \dd s\fstop
	\end{align}
	The identities in \eqref{eq:Nt} and \eqref{eq:Nt3}   follow by \eqref{eq:dir-form-S} and \eqref{eq:dir-form-S0}, respectively.	
\end{proof}

\begin{proof}[Proof of Theorem \ref{th:concentration}]
	Let us adopt the notation from Section \ref{sec:proof-local-conv}. Recalling the definitions	 of $u(t)$ and $g(t)$ in \eqref{eq:g-u}, the identity in \eqref{eq:Nt3} reads as
	\begin{equation}
		\frac{1}{dN^d}\,		\E\bigg[\bigg\|\frac{\eta_t(0,\emparg)}{\pi}-\frac{\pi_t(0,\emparg)}{\pi}\bigg\|_2^2\bigg] = 	 (u\ast g)(t)  \comma\qquad t \ge 0\fstop
	\end{equation}
	For the upper bound in \eqref{eq:concentration},  since  $u\le \tilde u$ and $0\le g\le \tilde g$ (cf.\ \eqref{eq:infinite-series-repr-u} and \eqref{eq:u-tilde}), we obtain
	\begin{equation}
		u\ast g\le \tilde u\ast g= \sum_{k=1}^\infty (\tilde g^{\ast k}\ast g) \le \sum_{k=2}^\infty\tilde g^{\ast k} \le \sum_{k=1}^\infty \tilde g^{\ast k} =\tilde u \comma
	\end{equation}
	and observe that the right-hand side was estimated, for all 	$t>0$ and $N\in \N$ large enough, in Lemma \ref{lemma:final-local-conv}. This provides the desired estimate for $t\ge1$; for $t\in [0,1]$, it suffices to recall  $g\le u\le 1$ (cf.\ \eqref{eq:infinite-series-repr-u} and \eqref{eq:u-small-times}), yielding
	\begin{equation}
		(u\ast g)(t)\le t\comma\qquad t\ge 0\fstop
	\end{equation}  For the lower bound in \eqref{eq:concentration}, using again $0\le g\le u$ and the monotonicity of $t\mapsto g(t)$ (Lemma \ref{lemma:f-g-positive}\eqref{it:lemma-f-g-2}), we get, for all $t\ge 0$,
	\begin{equation}
		(u\ast g)(t)\ge (g\ast g)(t)=2\int_0^{t/2} g(t-s)\, g(s)\, \dd s \ge 2g(t) \int_0^{{t/2}\wedge 1}g(s)\, \dd s\ge c\tonde{t\wedge 1}g(t)\comma
	\end{equation}	 
	where the last inequality follows  from Lemma \ref{lemma:f-g-positive}\eqref{it:lemma-f-g-1}--\eqref{it:lemma-f-g-2}. The first inequality in \eqref{eq:estimate-dir-RW} and \eqref{eq:dir-form-S0}, \eqref{eq:g-u} yield the desired lower bound, thus, concluding the proof of the theorem.
\end{proof}

\section{Proofs of Propositions \ref{pr:grad-estimates} and \ref{pr:limit-profile}}\label{sec:proofs-propositions}
We start with the proof of Proposition \ref{pr:grad-estimates}.

\begin{proof}[Proof of Proposition \ref{pr:grad-estimates}]
	Recall that, for $\P$-a.e.\ realization of the Poisson point process used for the updates, $\eta_t(0,\emparg)$ may also be interpreted as the probability distribution over $\T^d_N$ of the \textquotedblleft infinitesimal chunk\textquotedblright\ of mass $U_{0,t}\in \T^d_N$, which is a time-inhomogeneous random walk,  started in $0\in \T^d_N$, and evolving at later times as follows: nothing happens, until the vertex on which it sits, say $x\in \T^d_N$, experiences an update  with a nearest neighbor, say $y\in \T^d_N$; in that case, the chunk moves from $x$ to $y$ with probability $1/2$, while with the remaining probability it stays put. Since we are describing a time-inhomogeneous Markov process,  we adopt, only in this proof, the slightly more convenient notation $\eta_{s,t}(\emparg,\emparg)$, $s\le t$, indicating both starting and terminal times.	
	
	In view of this representation, Chapman-Kolmogorov formula holds in our context, and 	 reads as follows: for all $0\le s \le t$,
	\begin{equation}\label{eq:chapman-kolmogorov}
		\eta_{0,t}(0,x)-\eta_{0,t}(0,y) = \sum_{z\in \T^d_N}\eta_{0,s}(0,z)\tonde{\eta_{s,t}(z,x)-\eta_{s,t}(z,y)}\fstop
	\end{equation}
	Then, 
	by  \eqref{eq:chapman-kolmogorov} and Cauchy-Schwarz inequality, we get, for all $0\le s\le t$,
	\begin{align}
		\E\big[\big(\eta_t(0,x)-\eta_t(0,y)\big)^2\big]&= 	\E\bigg[\bigg(\sum_{z\in \T^d_N}\eta_{0,s}(0,z)\tonde{\eta_{s,t}(z,x)-\eta_{s,t}(z,y)}\bigg)^2\bigg]\\
		&\le \sum_{z\in \T^d_N}\E\big[\eta_{0,s}(0,z)\,	\big(\eta_{s,t}(z,x)-\eta_{s,t}(z,y)\big)^2\big]\\
		&=\sum_{z\in \T^d_N}\E\big[\eta_s(0,z)\big]\,\E\big[\big(\eta_{t-s}(z,x)-\eta_{t-s}(z,y)\big)^2\big]\\
		&= \sum_{z\in \T^d_N}\pi_s(0,z)\,\E\big[\big(\eta_{t-s}(z,x)-\eta_{t-s}(z,y)\big)^2\big]\comma
	\end{align}
	where the third step is a consequence of the fact that the Poisson updates over the time intervals $(0,s)$ and $(s,+\infty)$ are independent, while for the fourth one we used \eqref{eq:mean}. By using the well-known heat kernel estimate $\pi_s(0,z)\le C\tonde{\tonde{s^{d/2}\wedge N^d}\vee 1}^{-1}$, we further get
	\begin{align}
		\E\big[\big(\eta_t(0,x)-\eta_t(0,y)\big)^2\big]&\le \frac{C}{\tonde{s^{d/2}\wedge N^d}\vee 1}\sum_{z\in \T^d_N}\E\big[\big(\eta_{t-s}(z,x)-\eta_{t-s}(z,y)\big)^2\big]\\
		&= \frac{C}{\tonde{s^{d/2}\wedge N^d}\vee 1}\frac{2}{dN^d}\,\E\bigg[\cE\bigg(\frac{\eta_{t-s}(0,\emparg)}{\pi}\bigg)\bigg]\comma
	\end{align}
	where for the last step we used the duality relation \eqref{eq:dual-CRW}, the reversibility of ${\rm CRW}(\T^d_N)$ with respect to the counting measure on $\T^d_N\times \T^d_N$,  and Proposition \ref{pr:dir-form-CRW}. After inserting $s=t/2\wedge N^2$ in this last estimate, the upper bounds in \eqref{eq:local-conv} and \eqref{eq:estimate-dir-RW} yield the desired result.	
\end{proof}

We then conclude this section with the proof of Proposition \ref{pr:limit-profile} about the  limit profile.

\begin{proof}[Proof of Proposition \ref{pr:limit-profile}]
	Recall that, although we omit $N\in \N$ from the notation, $\norm{\emparg}_p$ denotes the $L^p$-norm on $(\T^d_N,\pi)$.
	By the triangle inequality and \eqref{eq:general-lb}--\eqref{eq:general-ub}, we get
	\begin{align}
		&\abs{\E\bigg[\bigg\|\frac{\eta_{tN^2}(0,\emparg)}{\pi}-1\bigg\|_p^p\bigg]^\frac1p-\big\|h_t-1\big\|_{L^p(\T^d)}}\\
		&\qquad\le {\E\bigg[\bigg\|\frac{\eta_{tN^2}(0,\emparg)}{\pi}-1\bigg\|_p^p\bigg]^\frac1p-\bigg\|\frac{\pi_{tN^2}(0,\emparg)}{\pi}-1\bigg\|_p}	\\
		&\qquad\quad +\abs{\bigg\|\frac{\pi_{tN^2}(0,\emparg)}{\pi}-1\bigg\|_p-\big\|h_t(\emparg/N)-1\big\|_p} + \abs{\big\|h_t(\emparg/N)-1\big\|_p-\big\|h_t-1\big\|_{L^p(\T^d)}}\\
		&\qquad \le \E\bigg[\bigg\|\frac{\eta_{tN^2}(0,\emparg)}{\pi}-\frac{\pi_{tN^2}(0,\emparg)}{\pi}\bigg\|_2^2\bigg]^\frac12\\
		&\qquad \quad +\bigg\|\frac{\pi_{tN^2}(0,\emparg)}{\pi}-h_t(\emparg/N)\bigg\|_p + \frac1N\, \sum_{i=1}^d\norm{\partial_i h_t}_{\cC(\T^d)}\comma
	\end{align}
	where the last term has been estimated via a first-order Taylor expansion (recall that $h_t\in \cC^\infty(\T^d)$, for all $t> 0$). The first term on the right-hand side above is $O(\frac1N)$ thanks to Theorem \ref{th:concentration}, while the second term is $o(\frac1N)$ by the quantitative local CLT for ${\rm RW}(\T^d_N)$ (see, e.g., \cite{lawler_limic_random_2010} for the analogous result on $\Z^d$). Since all these estimates are uniform over finite time intervals bounded away from zero and infinity, this concludes the proof.
\end{proof}

\section{An application to  cutoff of the  dual particle system}\label{sec:mixing-bin}

As a further consequence of Corollary \ref{cor:l2-bound} and the quantitative concentration result in Theorem \ref{th:concentration}, we provide sharp estimates for the mixing time of the particle system discussed in \cite{quattropani2021mixing}, dual to the averaging process, known as \emph{binomial splitting process}. In a few words, this model is the $k$-particle generalization  of the processes ${\rm RW}(\T^d_N)$ and ${\rm CRW}(\T^d_N)$ previously introduced. Here, we provide a quick description of this particle system and refer to \cite[Section 2.1]{quattropani2021mixing} (see also \cite[Section 1.2]{pymar2023mixing}) for more details, results, and background.

Given the discrete torus $\T^d_N$, $d\ge 1$, and an integer $k\in \N$, let $\Omega_k=\Omega_{N,k}$ denote the configuration space of $k$ indistinguishable particles on $\T^d_N$, namely,
\begin{equation}
	\Omega_k\eqdef \bigg\{\zeta \in \N_0^{\T^d_N}\bigg| \sum_{x\in \T^d_N}\zeta(x)=k\bigg\}\fstop
\end{equation}
On such a configuration space, we consider $(\zeta_t^k)_{t\ge 0}=(\zeta_t^{N,k})_{t\ge 0}$  as the Markov process which evolves as follows: start with a particle configuration $\zeta\in \Omega_k$ at time $t=0$; at the Poisson event times of ${\rm Avg}(\T^d_N)$ involving, say, the nearest neighbors $x, y \in \T^d_N$, reassign, independently and uniformly at random,  $x$ or $y$ as the new position of each particle which was originally sitting  on either $x$ or  $y$. The name \textquotedblleft binomial splitting\textquotedblright\ is now explained: the update  corresponds to placing on $x$ a  binomially-distributed fraction of the total $\zeta(x)+\zeta(y)$ particles. Further, the dynamics conserves the total number of particles, and, although particles interact when performing simultaneous jumps, at equilibrium, they are distributed as if they were independent. Indeed, a simple detailed balance computation ensures that 
\begin{equation}
	\mu_{k,\pi}= {\rm Multinomial}(k,\pi)\comma
\end{equation}
is the unique reversible measure for the particle system with $k$ particles. Finally, it is immediate to see that the particle systems with $k=1$ and $k=2$ particles correspond, respectively, to the processes ${\rm RW}(\T^d_N)$ and ${\rm CRW}(\T^d_N)$ described in Sections \ref{sec:RW} and \ref{sec:crw},  once the particles' labels are removed. 

We are interested in proving a total variation (TV) cutoff phenomenon for this process, as $k=k_N\to \infty$, strengthening the result in \cite[Theorem 2.3]{quattropani2021mixing}.
For this purpose, let, for all $k\in \N$,
$(P^k_t)_{t\ge 0}$ denote the Markov transition kernel associated to $(\zeta_t^k)_{t\ge 0}$, and write $\mu P^k_t$ for the distribution of $\zeta_t^k$ when  $\zeta_0^k\sim \mu$, for some $\mu \in \cP(\Omega_k)$. (Here, in analogy with $\cP(\T^d_N)$, $\cP(\Omega_k)$ stands for the space of probability distributions on $\Omega_k$.)  Further, we consider
\begin{equation}
	{\rm d}_k(t)={\rm d}_{N,k}(t)\eqdef 	\sup_{\mu \in \cP(\Omega_k)}\big\|\mu P_t^k-\mu_{k,\pi}\big\|_{\rm TV	}\comma\qquad t \ge 0\comma
\end{equation}
to encode	 the worst-case ${\rm TV}$-distance to equilibrium of the process as a function of time.
Then, recalling that $t_{\rm rel}=t_{\rm rel}^N>0$ denotes the spectral gap of ${\rm RW}(\T^d_N)$ (which, by \cite[Theorem 2.1]{quattropani2021mixing}, coincides with the spectral gap of the $k$-particle binomial splitting process), and letting	
\begin{equation}\label{eq:T(a)}
	T(a)=T_{N,k}(a)\eqdef \frac{t_{\rm rel}}{2}\tonde{\log k+aw_k}\comma\qquad a \in \R\comma
\end{equation}
the result in
\cite[Theorem 2.3]{quattropani2021mixing} specialized to the setting of the torus $\T^d_N$ reads as follows: 
\begin{equation}\label{eq:cutoff-bin-lb}
	\lim_{a\to -\infty} \liminf_{N\to \infty} {\rm d}_k(T(a))=1\comma
\end{equation}
\begin{equation}\label{eq:cutoff-bin-ub} \lim_{a\to \infty}\limsup_{N\to \infty} {\rm d}_k(T(a))=0\comma
\end{equation}
for $w_k\equiv 1$ and for 	  all  $k=k_N\to \infty$ satisfying $k=O(N^{2d})$. Our main improvement consists in extending this result from quadratic  to exponentially-many particles, paying the price of enlarging the so-called cutoff window $w_k$.
\begin{proposition}[Cutoff for binomial splitting process]\label{pr:cutoff-bin}
	Letting $w_k\eqdef 1\vee  \frac{\log k}{N^d}$, we have:
	\begin{enumerate}[(a)]
		\item \label{it:cutoff-lb}the claim in \eqref{eq:cutoff-bin-lb} holds for all $k\to \infty$ satisfying $\log k=o(N^d\log N)$;
		\item \label{it:cutoff-ub} the claim in \eqref{eq:cutoff-bin-ub} holds for all $k\to \infty$.
	\end{enumerate} 
	In particular, ${\rm TV}$-cutoff for $(\zeta_t^k)_{t\ge 0}$ holds for $k\to \infty$ satisfying $\log k=o(N^d\log N)$.
\end{proposition}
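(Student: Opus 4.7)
The plan is to prove the two halves separately, both leveraging the \emph{annealed multinomial representation} of the binomial splitting process. Coupling $(\zeta_t^k)_{t\ge 0}$ to an averaging trajectory $(\eta_t)_{t\ge 0}$ driven by the same Poisson clocks and started from $\eta_0=\zeta_0^k/k$, the independent fair coin tosses made by each of the $k$ particles at every update imply that, conditionally on the full $\eta$-history, $\zeta_t^k\sim {\rm Mult}(k,\eta_t)$. Mixing of the particle system thereby reduces to sharp $L^2$-estimates for $\eta_t$, for which Corollary \ref{cor:l2-bound} and Theorem \ref{th:concentration} supply the needed bounds; by convexity of total variation and \eqref{eq:worst-case} we may moreover restrict attention to the worst-case Dirac initializations $\zeta_0^k = k\,\car_{x_0}$, for which $\eta_0=\car_{x_0}$ and our hypotheses apply directly.

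For part \eqref{it:cutoff-ub}, convexity of total variation and the Cauchy--Schwarz bound $\|\cdot\|_{\rm TV}\le \tfrac12\sqrt{\chi^2}$, combined with the elementary identity
\begin{equation}
	\chi^2\ttonde{{\rm Mult}(k,\eta)\,\big\|\,{\rm Mult}(k,\pi)} = \ttonde{1+\|\eta/\pi-1\|_2^2}^k-1\comma
\end{equation}
give $\|\mu P_t^k-\mu_{k,\pi}\|_{\rm TV}\le \E\quadre{\min\set{1,\tfrac12\sqrt{(1+\|\eta_t/\pi-1\|_2^2)^k-1}}}$. Splitting the expectation on $\cA_\delta\eqdef\{\|\eta_t/\pi-1\|_2^2\le \delta/k\}$, using $(1+\delta/k)^k\le e^\delta$ on $\cA_\delta$ and the trivial bound $1$ on its complement, reduces the whole task to showing $k\,\E[\|\eta_t/\pi-1\|_2^2]\to 0$ at $t=T(a)$ as $a\to\infty$. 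Plugging $t=T(a)$ into Corollary \ref{cor:l2-bound}, the gain $e^{-2T(a)/t_{\rm rel}}=k^{-1}e^{-aw_k}$ is spoiled by the spurious exponential correction $\exp\ttonde{BT(a)/N^{d+2}}\le\exp\ttonde{C(\log k+aw_k)/N^d}$, and the choice $w_k=1\vee (\log k)/N^d$ is calibrated precisely so that the product reorganizes as $\exp\ttonde{(C-a)w_k}$, which vanishes uniformly in $k\to\infty$ as $a\to\infty$.

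For part \eqref{it:cutoff-lb} we run a Wilson-type distinguishing-statistic argument with the eigenfunction $\phi(x_1,\ldots,x_d)=\sqrt{2}\cos(2\pi x_1/N)$ of $-L^{\rm RW}$ (associated with the spectral gap $\lambda=1-\cos(2\pi/N)$) and the worst-case initial condition $\zeta_0^k=k\,\car_0$. The linear statistic $F(\zeta)\eqdef \tfrac{1}{k}\sum_x \phi(x)\zeta(x)$ has mean $m_t\eqdef \sqrt{2}\,e^{-\lambda t}$ under the dynamics, zero mean under $\mu_{k,\pi}$, and variance at most $\|\phi\|_\infty^2/k$ under $\mu_{k,\pi}$. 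By the law of total variance and the conditional multinomial representation,
\begin{equation}
	{\rm Var}[F(\zeta_t)]\le \frac{\|\phi\|_\infty^2}{k}+\|\phi\|_2^2\,\E\bigg[\bigg\|\frac{\eta_t}{\pi}-\frac{\pi_t(0,\emparg)}{\pi}\bigg\|_2^2\bigg]\comma
\end{equation}
and the second summand is precisely the object estimated by Theorem \ref{th:concentration}. At $t=T(a)$ the ratio ${\rm Var}[F(\zeta_t)]/m_t^2$ decomposes as $O\tttonde{e^{aw_k}}+O\ttonde{e^{BT(a)/N^{d+2}}/N^2}$: the first summand vanishes as $a\to-\infty$, and the second does so precisely when $\exp\ttonde{BT(a)/N^{d+2}}\ll N^2$, i.e.\ $\log k = o(N^d\log N)$, which is the assumed range. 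A standard Chebyshev argument applied under both $\mu_{k,\pi}$ and the time-$T(a)$ law of the dynamics, with decision boundary $m_t/2$, then yields \eqref{eq:cutoff-bin-lb}.

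The main technical obstacle is the delicate bookkeeping around the spurious exponential correction $\exp(Bt/N^{d+2})$ inherited from our local-smoothness upper bound (Theorem \ref{th:local-conv}) and propagated through Corollary \ref{cor:l2-bound} and Theorem \ref{th:concentration}: it is precisely this correction which both forces the enlargement of the window from $w_k\equiv 1$ to $w_k=1\vee(\log k)/N^d$ in part \eqref{it:cutoff-ub} and restricts the validity of part \eqref{it:cutoff-lb} to $\log k=o(N^d\log N)$. Any future sharpening of the $B/N^{d+2}$ factor in these results would automatically propagate to improved mixing statements in Proposition \ref{pr:cutoff-bin}.
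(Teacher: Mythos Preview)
Your overall strategy---reducing both bounds to $L^2$-estimates for the averaging process via the multinomial/intertwining structure, then invoking Corollary~\ref{cor:l2-bound} and Theorem~\ref{th:concentration}---matches the paper's, and your bookkeeping with the window $w_k$ and the spurious exponential correction is correct.

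There is, however, a gap in the upper bound. Your claim that ``conditionally on the full $\eta$-history, $\zeta_t^k\sim {\rm Mult}(k,\eta_t)$'' with $\eta_0=\zeta_0^k/k$ is \emph{false} for general deterministic $\zeta_0$: it holds only when $\zeta_0$ is itself a sample from ${\rm Mult}(k,\eta_0)$, hence among deterministic initial conditions only for $\zeta_0=k\car_{x_0}$ (already at $t=0$ the conditional law is $\delta_{\zeta_0}$, which is not ${\rm Mult}(k,\zeta_0/k)$ unless $\zeta_0/k$ is a Dirac). You then assert that $k\car_{x_0}$ is the worst case, citing \eqref{eq:worst-case}; but that identity is a convexity argument on the simplex $\cP(\T^d_N)$ and does not transfer to the configuration space $\Omega_k$. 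The fix is to use the \emph{labeled} particle representation valid for every $\zeta_0=\sum_{i=1}^k\car_{x_i}$: conditionally on the environment the particles are independent with laws $\eta_t(x_i,\cdot)$, so the $\chi^2$-distance factorizes as $\prod_i(1+\|\eta_t(x_i,\cdot)/\pi-1\|_2^2)-1$, and AM--GM together with translation invariance reduce to the same criterion $k\,\E[\|\eta_t(0,\cdot)/\pi-1\|_2^2]\to 0$. This is essentially the content of \cite[Lemma~6.4]{quattropani2021mixing}, which the paper simply cites.

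For the lower bound you take a genuinely different route. The paper picks a \emph{set} $B_*\subset\T^d_N$ maximizing $\pi_t(0,B)-\pi(B)$ (which is at least $\tfrac12 e^{-t/t_{\rm rel}}$) and uses the binomial count $\sum_{x\in B_*}\zeta(x)$ as distinguishing statistic, controlling the fluctuation $\eta_t(0,B_*)-\pi_t(0,B_*)$ via Theorem~\ref{th:concentration} and then applying Cantelli's inequality twice. You instead run Wilson's method with the spectral-gap eigenfunction and the law of total variance. Both arguments share the same second-moment skeleton and both land on the condition $\log k=o(N^d\log N)$; your version is arguably cleaner because the mean $m_t=\sqrt{2}\,e^{-\lambda t}$ is exact rather than a one-sided bound, while the paper's set-based test avoids explicit spectral input and would port more readily to geometries where eigenfunctions are less accessible.
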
 
We anticipate that, on the one hand,   the strategy in \cite{quattropani2021mixing} and our estimates on the fast local smoothness of ${\rm Avg}(\T^d_N)$ suffice to yield the desired upper bound, i.e., Proposition \ref{pr:cutoff-bin}\eqref{it:cutoff-ub}. On the other hand, for the lower bound, our results on early concentration of ${\rm Avg}(\T^d_N)$ combined with  the proof in \cite{quattropani2021mixing} fail to provide sharp estimates in the regime $\log k\ge N^d$. With the purpose of covering also this case, here, we follow a different proof strategy.

Finally, the key observation of the proof of Proposition \ref{pr:cutoff-bin} is that $(\zeta_t^k)_{t\ge 0}$ and ${\rm Avg}(\T^d_N)$ are related to each other via the following \textit{intertwining relation} \cite[Proposition 3.1]{quattropani2021mixing}:  for all $\xi\in \cP(\T^d_N)$, $k\in \N$, and $t\ge 0$, 
\begin{equation}\label{eq:intertwining-rel}
	\mu_{k,\xi}P_t^k = \E\big[\mu_{k,\eta_t^\xi}\big]\comma\qquad \text{with}\  \mu_{k,\eta}\eqdef {\rm Multinomial}(k,\eta)\comma \text{for all}\ \eta\in \cP(\T^d_N)\fstop
\end{equation}
We crucially exploit this fact for establishing both claims in Proposition \ref{pr:cutoff-bin}.

\begin{proof}[Proof of Proposition \ref{pr:cutoff-bin}]
	We divide the proof into two parts, starting with item \eqref{it:cutoff-ub}.

	\smallskip \noindent
	\emph{Upper bound \eqref{it:cutoff-ub}.}
	By (a slight generalization of) the intertwining  relation in \eqref{eq:intertwining-rel}, we have the following estimate
	\cite[Lemma 6.4]{quattropani2021mixing}: for all $k\in \N$ and $t\ge 0$, 
	\begin{equation}
		{\rm d}_k(t)\eqdef 	\sup_{\mu\in \cP(\Omega_k)}\big\|\mu P_t^k-\mu_{k,\pi}\big\|_{\rm TV}\le \sqrt{ek}\, \E\bigg[\bigg\|\frac{\eta_t(0,\emparg)}{\pi}-1\bigg\|_2^2\bigg]^\frac12\fstop
	\end{equation}
	Thanks to Corollary \ref{cor:l2-bound}, we further get, for some $C=C(d)>0$ and for all $t\ge N^2$, 
	\begin{align}
		{\rm d}_k(t)&\le  \frac{C\sqrt{k}\,N^{d/2}}{\tonde{N^{d/2}\wedge t^{d/4}}\vee 1}\exp\tonde{-t/t_{\rm rel}+\frac{Bt}{2N^{d+2}}}=  C\sqrt k\, \exp\tonde{-t/t_{\rm rel}+\frac{Bt}{2N^{d+2}}}\comma
	\end{align}
	where  for the last step we used $t\ge N^2$. Recall $t_{\rm rel}=\Theta(N^2)$, $k\to \infty$, and $w_k\eqdef 1\vee \frac{\log k}{N^d}$. Hence, by substituting $t=T(a)\gg N^2$ given in \eqref{eq:T(a)},   the right-hand side above reads as
	\begin{equation}
		C \exp\tonde{-\frac{a}2w_k + \frac{Bt_{\rm rel}}{4N^{d+2}}\tonde{\log k+a w_k}}\comma
	\end{equation}
	which vanishes taking first $N\to \infty$ and then $a\to \infty$.  This concludes the proof of the upper bound.

	\smallskip \noindent
	\emph{Lower bound \eqref{it:cutoff-lb}.}
	The approach we follow goes via the intertwining relation \eqref{eq:intertwining-rel}, which we employ as follows: 
	\begin{align}
		\label{eq:mixing-bin-lb1}
		\begin{aligned}
			{\rm d}_k(t)\ge	\big\|\mu_{k,\xi}P_t^k-\mu_{k,	\pi}\big\|_{\rm TV}& = \sup_{A\subset \Omega_k}\big(\E\big[\mu_{k,\eta_t^\xi}(A)\big]-\mu_{k,\pi}(A)\big)\\
			&\ge
			\E\big[\mu_{k,\eta_t^\xi}(A_*)\big]-\mu_{k,\pi}(A_*)\comma
		\end{aligned}
	\end{align}
	for some  $A_*\subset \Omega_k$ and $\xi\in \cP(\T^d_N)$ that we now specify.

	First, we set $\xi=\car_0$ and $t=T(a)$, for some $a<0$.
	With this choice, we have  (see, e.g., \cite[Lemma 20.11]{levin2017markov})
	\begin{align}
		\norm{\pi_t(0,\emparg)-\pi}_{\rm TV}\eqdef \sup_{B\subset \T^d_N}\big(\pi_t(0,B)-\pi(B)\big)\ge \frac{e^{-t/t_{\rm rel}}}2= 	\frac{e^{-aw_k/2}}{2\sqrt k}\fstop
	\end{align}
	Let $ B_*=B_*(N,a)\subset \T^d_N$ be a proper subset which attains the above supremum; hence,  \begin{equation}\label{eq:pit-pi-b}
		\pi_t(0,B_*)\ge \pi(B_*)+\frac{e^{-aw_k/2}}{2\sqrt k}
		\fstop
	\end{equation} Now,  for some $b> 0$ to be fixed later, define, for all $N\in \N$ large enough, 
	\begin{equation}
		A_*=A_*(N,a,b)\eqdef \set{\zeta \in \Omega_k: \sum_{x\in B_*}\zeta(x)\ge k\ttonde{\pi(B_*)+\tfrac{b}{\sqrt k}}}\fstop
	\end{equation}
	
	Let us continue from \eqref{eq:mixing-bin-lb1}. Since, under $\mu_{k,\eta}$, $k\in \N$ and $\eta\in \cP(\T^d_N)$, we have $Z^{k,\eta(B_*)}\eqdef\sum_{x\in B_*}\zeta(x)\sim {\rm Bin}(k,\eta(B_*))$, we obtain (recall $t=T(a)$)
	\begin{align}
		{\rm d}_k(t)&\ge \E\big[\mathsf P\big(Z^{k,\eta_t(0,B_*)}\ge k\ttonde{\pi(B_*)+\tfrac{b}{\sqrt k}}\big)\big] - \mathsf P\big(Z^{k,\pi(B_*)}\ge k\ttonde{\pi(B_*)+\tfrac{b}{\sqrt k}}\big)\\
		&\ge \E\big[\mathsf P\big(Z^{k,\eta_t(0,B_*)}\ge k\ttonde{\pi(B_*)+\tfrac{b}{\sqrt k}}\big)\big] - \frac{\pi(B_*)\tonde{1-\pi(B_*)}}{b^2}\\
		&\ge   \E\big[\mathsf P\big(Z^{k,\eta_t(0,B_*)}\ge k\ttonde{\pi(B_*)+\tfrac{b}{\sqrt k}}\big)\big] - \frac{1}{b^2}\comma
	\end{align}
	where for the second step we applied Cauchy-Schwarz inequality. Let us now provide a lower bound for the first term on the right-hand side above. For this purpose, for every $c>b>0$, 
	\begin{align}
		&\E\big[\mathsf P\big(Z^{k,\eta_t(0,B_*)}\ge k\pi(B_*)+b\sqrt k\big)\big]\\
		&\quad\ge \E\big[\mathsf P\big(Z^{k,\eta_t(0,B_*)}\ge k\pi(B_*)+b\sqrt k\big)\,\car_{\big\{\eta_t(0,B_*)\ge \pi(B_*)+\tfrac{c}{\sqrt k}\big\}}\big]\\
		&\quad\ge \mathsf P\big(Z^{k,\pi(B_*)+\frac{c}{\sqrt k}}\ge k\pi(B_*)+b\sqrt k\big)\,\P\big(\eta_t(0,B_*)\ge \pi(B_*)+\tfrac{c}{\sqrt k}\big)\\
		&\quad= \mathsf P\big(Z^{k,\pi(B_*)+\frac{c}{\sqrt k}}\ge k\pi(B_*)+c\sqrt k-(c-b)\sqrt k\big) \P\big(\eta_t(0,B_*)\ge \pi(B_*)+\tfrac{c}{\sqrt k}\big)\\
		&\quad\ge \frac{(c-b)^2}{(c-b)^2+
			1}\,	\P\big(\eta_t(0,B_*)\ge \pi(B_*)+\tfrac{c}{\sqrt k}\big)	\comma
	\end{align}
	where for the second inequality we used the stochastic domination $Z^{k,q}\gtrsim Z^{k,p}$ if $q>p$, while for the last one we used Cantelli inequality. Furthermore, provided that $e^{-aw_k/2}>4c$, we get, again by Cantelli inequality,
	\begin{align}
		\P\big(\eta_t(0,B_*)\ge \pi(B_*)+\tfrac{c}{\sqrt k}\big) &= \P\big(\eta_t(0,B_*)\ge \pi_t(0,B_*)-\big(\pi_t(0,B_*)-\pi(B_*)-\tfrac{c}{\sqrt k}\big)\big)\\
		&\ge \frac{1}{1+\sigma^2/r^2}\comma
	\end{align}
	with
	\begin{equation}
		\sigma\eqdef \E\big[\big(\eta_t(0,B_*)-\pi_t(0,B_*)\big)^2\big]^\frac12\comma\quad  r\eqdef 	\pi_t(0,B_*)-\pi(B_*)-\tfrac{c}{\sqrt k}\ge \frac{e^{-aw_k/2}/2-c}{\sqrt k}>0	\fstop
	\end{equation}
	Hence, we are done as soon as we can show that $\sigma^2/r^2\to 0$. Recall \eqref{eq:pit-pi-b} and $e^{-aw_k}>4c$; then, we have
	\begin{equation}
		r^2\ge \frac1k\tonde{\frac{e^{-aw_k/2}}{2}-c}^2\ge \frac{e^{-aw_k}}{16k} \fstop
	\end{equation}
	Moreover, recalling that $\|\frac{\eta_t(0,\emparg)}{\pi}-\frac{\pi_t(0,\emparg)}{\pi}\|_1= 2\sup_{B\subset \T^d_N}(\eta_t(0,B)-\pi_t(0,B))$, 	we get
	\begin{align}
		\sigma^2 \eqdef  \E\big[\big(\eta_t(0,B_*)-\pi_t(0,B_*)\big)^2\big]
		&\le \frac14\E\bigg[\bigg\|\frac{\eta_t(0,\emparg)}{\pi}-\frac{\pi_t(0,\emparg)}{\pi}\bigg\|_1^2\bigg]\\
		&\le \frac14 \E\bigg[\bigg\|\frac{\eta_t(0,\emparg)}{\pi}-\frac{\pi_t(0,\emparg)}{\pi}\bigg\|_2^2\bigg]\\
		&\le \frac{C}{N^2}\frac{e^{-aw_k}}{k}\exp\tonde{\frac{Bt_{\rm rel}}{2 N^{d+2}}\tonde{\log k +aw_k}}\comma
	\end{align}
	where for the second line we used Jensen inequality, while for the third one we used the second inequality in \eqref{eq:concentration} with $t=T(a)\ge N^2$. (Here, $B,C>0$ depend only on $d\ge 1$.)
	Hence, 
	\begin{equation}
		\frac{\sigma^2}{r^2}\le \frac{C}{N^2}\exp\tonde{\frac{Bt_{\rm rel}}{2N^{d+2}}\tonde{\log k+aw_k}}\le \frac{C}{N^2}\exp\tonde{\frac{Bt_{\rm rel}}{2N^{d+2}}\log k} \xrightarrow{N\to \infty} 0\comma
	\end{equation}
	provided that $\log k = o(N^d\log N)$.	Finally, setting $c>b>0$ large enough and $a<0$ small enough concludes the proof.
\end{proof}

\bibliographystyle{alpha}

\begin{thebibliography}{BGPS06}
	
	\bibitem[AF02]{aldous-fill-2014}
	David Aldous and James~Allen Fill.
	\newblock {R}eversible {M}arkov {C}hains and {R}andom {W}alks on {G}raphs,
	2002.
	\newblock Unfinished monograph, recompiled 2014. Available at
	\url{https://www.stat.berkeley.edu/users/aldous/RWG/book.pdf}.
	
	\bibitem[AL12]{aldous_lecture_2012}
	David Aldous and Daniel Lanoue.
	\newblock A lecture on the averaging process.
	\newblock {\em Probab. Surv.}, 9:90--102, 2012.
	
	\bibitem[Ald11]{aldous2011finite}
	David Aldous.
	\newblock Finite {M}arkov {I}nformation-{E}xchange {P}rocesess, 2011. Available at \url{https://www.stat.berkeley.edu/~aldous/260-FMIE/index.html}.
	
	\bibitem[BB21]{banerjee2020rates}
	Sayan Banerjee and Krzysztof Burdzy.
	\newblock Rates of convergence to equilibrium for potlatch and smoothing
	processes.
	\newblock {\em Ann. Probab.}, 49(3):1129--1163, 2021.
	
	\bibitem[BBG20]{berthier_tight_2020}
	Rapha\"{e}l Berthier, Francis Bach, and Pierre Gaillard.
	\newblock Tight {N}onparametric {C}onvergence {R}ates for {S}tochastic
	{G}radient {D}escent under the {N}oiseless {L}inear {M}odel.
	\newblock In {\em Proceedings of the 34th International Conference on Neural
		Information Processing Systems}, NIPS'20, Red Hook, NY, USA, 2020. Curran
	Associates Inc.
	
	\bibitem[BCN20]{becchetti_consensus_2020}
	Luca Becchetti, Andrea Clementi, and Emanuele Natale.
	\newblock Consensus dynamics: an overview.
	\newblock {\em ACM SIGACT News}, 51(1):58--104, 2020.
	
	\bibitem[BDCJ22]{barrera2022gradual}
	Gerardo Barrera, Conrado Da~Costa, and Milton Jara.
	\newblock Gradual convergence for {L}angevin dynamics on a degenerate
	potential.
	\newblock {\em arXiv:2209.11026}, 2022.
	
	\bibitem[BGPS06]{boyd_et_al_randomized_2006}
	Stephen Boyd, Arpita Ghosh, Balaji Prabhakar, and Devavrat Shah.
	\newblock Randomized gossip algorithms.
	\newblock {\em IEEE Trans. Inform. Theory}, 52(6):2508--2530, 2006.
	
	\bibitem[Cao23]{cao_explicit_2023}
	Fei Cao.
	\newblock Explicit decay rate for the {G}ini index in the repeated averaging
	model.
	\newblock {\em Math. Methods Appl. Sci.}, 46(4):3583--3596, 2023.
	
	\bibitem[CDSZ22]{chatterjee2020phase}
	Sourav Chatterjee, Persi Diaconis, Allan Sly, and Lingfu Zhang.
	\newblock A phase transition for repeated averages.
	\newblock {\em Ann. Probab.}, 50(1):1--17, 2022.
	
	\bibitem[CQS23]{caputo_quattropani_sau_cutoff_2023}
	Pietro Caputo, Matteo Quattropani, and Federico Sau.
	\newblock Cutoff for the averaging process on the hypercube and complete
	bipartite graphs.
	\newblock {\em Electron. J. Probab.}, 28:Paper No. 100, 31, 2023.
	
	\bibitem[DKS23]{deuschel2023gradient}
	Jean-Dominique Deuschel, Takashi Kumagai, and Martin Slowik.
	\newblock Gradient estimates of the heat kernel for random walks among
	time-dependent random conductances.
	\newblock {\em arXiv:2309.09675}, 2023.
	
	
	
	\bibitem[KL99]{kipnis_scaling_1999}
	Claude Kipnis and Claudio Landim.
	\newblock {\em Scaling limits of interacting particle systems}, volume 320 of
	{\em Grundlehren der Mathematischen Wissenschaften}.
	\newblock Springer-Verlag, Berlin, 1999.
	
	\bibitem[Lig05]{liggett_interacting_2005-1}
	Thomas~M. Liggett.
	\newblock {\em Interacting particle systems}.
	\newblock Classics in Mathematics. Springer-Verlag, Berlin, 2005.
	\newblock Reprint of the 1985 original.
	
	\bibitem[LL10]{lawler_limic_random_2010}
	Gregory~F. Lawler and Vlada Limic.
	\newblock {\em Random walk: a modern introduction}, volume 123 of {\em
		Cambridge Studies in Advanced Mathematics}.
	\newblock Cambridge University Press, Cambridge, 2010.
	
	\bibitem[LP17]{levin2017markov}
	David~A. Levin and Yuval Peres.
	\newblock {\em Markov chains and mixing times}.
	\newblock American Mathematical Society, Providence, RI, 2017.
	\newblock Second edition of [MR2466937], with contributions by Elizabeth L.
	Wilmer, with a chapter on ``Coupling from the past'' by James G. Propp and
	David B. Wilson.
	
	\bibitem[MSW22]{movassagh_repeated2022}
	Ramis Movassagh, Mario Szegedy, and Guanyang Wang.
	\newblock {R}epeated {A}verages on {G}raphs.
	\newblock {\em arXiv:2205.04535}, 2022.
	
	\bibitem[MT06]{montenegro_mathematical_2005}
	Ravi Montenegro and Prasad Tetali.
	\newblock Mathematical aspects of mixing times in {M}arkov chains.
	\newblock {\em Found. Trends Theor. Comput. Sci.}, 1(3):x+121, 2006.
	
	\bibitem[PR23]{pymar2023mixing}
	Richard Pymar and Nicol{\'a}s Rivera.
	\newblock Mixing of the symmetric beta-binomial splitting process on arbitrary
	graphs.
	\newblock {\em arXiv:2307.02406}, 2023.
	
	\bibitem[QS23]{quattropani2021mixing}
	Matteo Quattropani and Federico Sau.
	\newblock Mixing of the averaging process and its discrete dual on
	finite-dimensional geometries.
	\newblock {\em Ann. Appl. Probab.}, 33(2):936--971, 2023.
	
	\bibitem[Sha09]{shah2009gossip}
	Devavrat Shah.
	\newblock Gossip algorithms.
	\newblock {\em Found. Trends Netw.}, 3(1):1–125, 2009.
	
	\bibitem[Spi22]{spiro_averaging_2022}
	Sam Spiro.
	\newblock An averaging process on hypergraphs.
	\newblock {\em J. Appl. Probab.}, 59(2):495--504, 2022.
	
	\bibitem[Tra23]{tran2023cutoff}
	Hong-Quan Tran.
	\newblock Cutoff for the {G}lauber-{E}xclusion process in the full
	high-temperature regime: an information percolation approach.
	\newblock {\em arXiv:2310.15810}, 2023.
	
\end{thebibliography}

\end{document}